\journal{Journal of \LaTeX\ Templates}
\theoremstyle{plain}
\newtheorem{thm}{Theorem}
\newtheorem{lem}[thm]{Lemma}
\newtheorem{prop}[thm]{Proposition}
\newtheorem{cor}[thm]{Corollary}
\theoremstyle{definition}
\newtheorem{dfn}[thm]{Definition}
\newtheorem{ex}[thm]{Example}
\theoremstyle{remark}
\newtheorem{rmk}[thm]{Remark}
\DeclareMathOperator{\End}{End}
\begin{document}

\begin{frontmatter}

\title{Remarks on Mathieu-Zhao Subspaces of Commutative Associative Algebras and Vertex Algebras}

\author{Gaywalee Yamskulna}

\address[mymainaddress]{Department of Mathematics\\ Illinois State University, Normal, IL}

\begin{abstract}
We introduce a notion of Mathieu-Zhao subspaces of vertex algebras. Among other things, we show that for a vertex algebra $V$ and its subspace $M$ that contains $C_2(V)$, $M$ is a Mathieu-Zhao subspace of $V$ if and only if the quotient space $M/C_2(V)$ is a Mathieu-Zhao subspace of a commutative associative algebra $V/C_2(V)$. As a result,  one can study the famous Jacobian conjecture in terms of Mathieu-Zhao subspaces of vertex algebras. In addition, for a $CFT$-type vertex operator algebra $V$ that satisfies the $C_2$-cofiniteness condition, we classify all Mathieu-Zhao subspaces $M$ that contain $C_2(V)$. 
\end{abstract}

\begin{keyword}
\texttt{Vertex operator algebras}
\MSC[2010] 17B69
\end{keyword}

\end{frontmatter}


\section{Introduction}

The notion of Mathieu-Zhao subspaces of commutative associative algebras is a generalization of the fundamental notion of ideals and was introduced by Zhao in 2009 (cf. \cite{Z1}). The study of Mathieu-Zhao subspaces of commutative associative algebras grew out of many attempts to understand the Jacobian conjecture (cf. \cite{M, ZI, Z2, Z4}). This conjecture is a famous problem on polynomials in several variables and was proposed by Keller in 1939. The Jacobian conjecture claims that any polynomial map $F$ of $\mathbb{C}^n$ with $\det J(F)=1$ is an automorphism of $\mathbb{C}^n$ (cf. \cite{K}). Here $J(F)=\left(\frac{\partial F(x_i)}{\partial x_j}\right)_{1\leq i,j\leq n}$. This conjecture remains open even for the case when $n=2$. In 1998, the Fields Medalist and the Wolf Prize Winner S. Smale included the Jacobian conjecture in his list of 18 fundamental mathematical problems for the 21st century (cf. \cite{S}).

The theory of vertex (operator) algebras has been showing its powers in the solution of concrete mathematical problems and in the understanding of conceptual but subtle mathematical and physical structures of conformal field theories. The original motivation for the introduction of the notion of vertex operator algebra arose from the problem of realizing the Monster group as a symmetry group of a certain infinite-dimensional graded vector space with natural additional structure (cf. \cite{B1, B2, FLM1, FLM2}). Vertex (operator) algebras are known as the mathematical counterparts of chiral algebras in two-dimensional quantum conformal field theory (cf. \cite{BPZ, MS}). In many aspects, vertex (operator) algebras are analogous to commutative associative algebras. In fact, commutative vertex algebras are commutative associative algebras with derivations (cf. \cite{B1}). 

The aim of this paper is to deepen our understanding on connections between vertex (operator) algebras and commutative associative algebras through Mathieu-Zhao subspaces of commutative associative algebras. In addition, we want to find a way to study the Jacobian conjecture in term of vertex algebras. In section 2, we provide necessary background on vertex algebras $V$ and commutative associative algebras $V/C_2(V)$, which is an important algebra for the study of representation theory of vertex operator algebras, that we will need in later sections. In section 3, we introduce a notion of Mathieu-Zhao subspaces of vertex algebras and study properties of these subspaces. We show that for a vertex algebra $V$ and its subspace $M$ that contains $C_2(V)$, $M$ is a Mathieu-Zhao subspace of $V$ if and only if the quotient space $M/C_2(V)$ is a Mathieu-Zhao subspace of a commutative associative algebra $V/C_2(V)$. In section 4, we study Mathieu-Zhao subspaces of vertex algebras of $CFT$-type that satisfy the $C_2$-cofinite condition. Particularly, we show that if $M$ is a subspace of $V$ that contains $C_2(V)$ and ${\bf 1}\not\in M$, then $M$ is a Mathieu-Zhao subspace of $V$. In addition, we discuss a connection between the Jacobian conjecture and certain subspaces of Heisenberg vertex operator algebras.

\section{Background Materials}

In this section we will review materials on vertex algebras $V$, and commutative associative algebras $V/C_2(V)$ that we will need in later sections. Also, we recall examples of vertex algebras that we will refer to in later sections.

\begin{dfn}(\cite{B1, FLM2, LLi} )A {\em vertex algebra} is a vector space $V$ equipped with a linear map 
\begin{eqnarray}
Y:V&\rightarrow&(\End V)[[z,z^{-1}]]\nonumber\\ 
v&\mapsto&Y(v,z)=\sum_{n\in\mathbb{Z}}v_n z^{-n-1}\text{ (where $v_n\in\End V$)}
\end{eqnarray}
and equipped with a distinguised vector ${\bf 1}$, called the {\em vacuum (vector)}, such that for $u,v\in V$, 
\begin{eqnarray}
&&u_nv=0\text{ for $n$ sufficiently large},\\
&&Y({\bf 1},z)=1,\\
&&Y(v,z){\bf 1}\in V[[z]]\text{ and }\lim_{z\rightarrow 0}Y(v,z){\bf 1}=v
\end{eqnarray}
and such that 
\begin{eqnarray}
&&z_0^{-1}\delta\left(\frac{z_1-z_2}{z_0}\right)Y(u,z_1)Y(v,z_2)-z_0^{-1}\delta\left(\frac{z_2-z_1}{-z_0}\right)Y(v,z_2)Y(u,z_1)\nonumber\\
=&&z_2^{-1}\delta\left(\frac{z_1-z_0}{z_2}\right)Y(Y(u,z_0)v,z_2)
\end{eqnarray}
the {\em Jacobi identity}. 
\end{dfn}

\smallskip

A vertex algebra $V$ equipped with a $\mathbb{Z}$-grading $V=\coprod_{n\in\mathbb{Z}} V_{(n)}$ is called a $\mathbb{Z}$-graded vetex algebra if ${\bf 1}\in V_{(0)}$ and if for $u\in V_{(k)}$ with $k\in\mathbb{Z}$ and for $m,n\in\mathbb{Z}$, 
\begin{equation}
u_mV_{(n)}\subseteq V_{(k+n-m-1)}.
\end{equation} 
An $\mathbb{N}$ graded vertex algebra is defined in the obvious way.

\smallskip

From the Jacobi identity we have Borcherds' commutator formula and iterate formula:
\begin{eqnarray}
&&{[u_m,v_n]}=\sum_{i\geq 0}{m\choose i}(u_iv)_{m+n-i},\label{commutativity}\\
&&(u_mv)_nw=\sum_{i\geq 0}(-1)^i {m\choose i}(u_{m-i}v_{n+i}w-(-1)^mv_{m+n-i}u_iw)\label{associativity}
\end{eqnarray}
for $u,v,w\in V$, $m,n\in\mathbb{Z}$. Specifically, we have
\begin{eqnarray}
&&[u_{-1},v_{-1}]=\sum_{i\geq 0}(-1)^i(u_iv)_{-2-i}\\
&&(u_0v)_0=u_0v_0-v_0u_0\\
&&(u_0v)_{-1}=u_0v_{-1}-v_{-1}u_0\\
&&(u_{-1}v)_{-1}=\sum_{i\geq 0} u_{-1-i}v_{-1+i}+v_{-2-i}u_i.
\end{eqnarray}

Define a linear operator $\mathcal{D}$ on $V$ by 
\begin{equation}
\mathcal{D}(v)=v_{-2}{\bf 1}\text{ for }v\in V. 
\end{equation}
We have \begin{equation}\label{Drel}
[\mathcal{D},v_n]=(\mathcal{D}v)_n=-nv_{n-1}\text{ for }v\in V, n\in\mathbb{Z}.
\end{equation}
Moreover, $Y(u,z)v=e^{z\mathcal{D}}Y(v,-z)u$ for $u,v\in V$. Equivalently, 
$$u_nv=-\sum_{i\geq 0}\frac{(-1)^{i+n}}{i!}\mathcal{D}^iv_{i+n}u.$$ 
In particular, we have 
\begin{eqnarray}
u_0v&&=-v_0u-\sum_{i\geq 1}\frac{(-1)^{i}}{i!}\mathcal{D}^iv_{i}u,\label{u0v}\\
u_0u&&=\frac{1}{2}\sum_{i=1}^{\infty}\frac{D^i}{i!}u_iu(-1)^{-i-1}\in D(V),\label{u0u}\\
u_{-1}v&&=v_{-1}u-\sum_{i\geq 1}\frac{(-1)^{i+1}}{i!}\mathcal{D}^iv_{i-1}u,\label{u-1v}\\
u_0\mathcal{D}(v)&&=\mathcal{D}(u(0)v)\text{ and }u_{-1}\mathcal{D}(v)=\mathcal{D}(u_{-1}v)-u_{-2}v.
\end{eqnarray}

\smallskip

\begin{dfn}(\cite{FLM2, LLi}) A vertex operator algebra is a $\mathbb{Z}$-graded vertex algebra $$V=\coprod_{n\in\mathbb{Z}} V_{(n)}\text{ for }v\in V_{(n)},~n={\it wt } v,$$ such that $\dim V_{(n)}<\infty$ for $n\in\mathbb{Z}$ and $V_n=0$ for $n$ sufficiently negative, equipped with a distinguished homogeneous vector $\omega\in V_{(2)}$ (the conformal vector), satisfying the following conditions: 
$$[L(m),L(n)]=(m-n)L(m+n)+\frac{1}{12}(m^3-m)\delta_{m+n,0}c_V$$ for $m,n\in\mathbb{Z}$ where $Y(\omega,z)=\sum_{n\in\mathbb{Z}}L(n)z^{-n-2}$ and $c_V\in \mathbb{C}$. We also have $L(0)v=nv =({\it wt }v)v$ for $n\in\mathbb{Z}$ and $v\in V_{(n)}$; and $Y(L(-1)v,z)=\frac{d}{dz}Y(v,z)$.
\end{dfn}

\smallskip

\begin{dfn} A vertex operator algebra $V$ is of $CFT$-type if $V=\coprod_{n\geq 0} V_{(n)}$ and $V_0=\mathbb{C}{\bf 1}$.
\end{dfn}
\smallskip

\begin{dfn}(\cite{LLi}) An ideal of the vertex algebra $V$ is a subspace $I$ such that for all $v\in V$ and $w\in I$, $v_nw\in I$ and $w_nv\in I$ for all $v\in V$, $w\in I$ and $n\in\mathbb{Z}$.
\end{dfn}

\noindent If $V\neq \{0\}$ and $V$ is the only nonzero ideal we say that $V$ is simple.

\smallskip

\begin{dfn} Let $(V,Y,{\bf 1})$ be a vertex algebra. We define 
$$C_2(V):=Span_{\mathbb{C}}\{u_{-2}v~|~u,v\in V\}.$$ 
$V$ satisfies the $C_2$-cofiniteness condition if the dimension of $V/C_2(V)$ is finite. 
\end{dfn}

\smallskip

\begin{rmk}\label{dv} \ \ 

\begin{enumerate} 
\item $\mathcal{D}(V)\subseteq C_2(V)$, and $u_{-n}v\in C_2(V)$ for all $u,v\in V$ and $n\geq 2$. 
\item For $a\in V$, $w\in C_2(V)$, we have $a_0w$, $a_{-1}w\in C_2(V)$. 
\end{enumerate}
\end{rmk}

\smallskip

\begin{prop}\label{c2v}(\cite{Zh}) For a vertex algebra $V$, the $(0)^{th}$ product and $(-1)^{th}$ product induce a commutative Poisson algebra structure on $V/C_2(V)$. In particular, we have 
\begin{eqnarray*}
(a_0b)_0v&=&a_0(b_0v)-b_0(a_0v),\\
a_0b&\equiv&-b_0a\mod C_2(V),\\
(a_{-1}b)_{-1}v&\equiv&a_{-1}(b_{-1}v)\mod C_2(V),\\
(a_{-1}b)_0v&\equiv&a_{-1}(b_0v)+b_{-1}(a_0v)\mod C_2(V)
\end{eqnarray*} 
for $a,b,v\in V$. 
\end{prop}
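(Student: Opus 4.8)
The plan is to verify each of the four congruences by specializing Borcherds' commutator formula~\eqref{commutativity} and iterate formula~\eqref{associativity}, then reading everything modulo $C_2(V)$ while using Remark~\ref{dv}. Recall that modulo $C_2(V)$ every term of the shape $u_{-n}v$ with $n\geq 2$ vanishes, and $\mathcal{D}(V)\subseteq C_2(V)$; moreover, by Remark~\ref{dv}(2), $C_2(V)$ is stable under $a_0$ and $a_{-1}$, so all the displayed identities are well defined on the quotient.

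First I would establish $a_0b\equiv -b_0a\bmod C_2(V)$: this is immediate from~\eqref{u0v}, since every summand $\mathcal{D}^i v_i u$ for $i\geq 1$ lies in $\mathcal{D}(V)\subseteq C_2(V)$. Next, for $(a_0b)_0v = a_0(b_0v)-b_0(a_0v)$, I would take the displayed identity $(u_0v)_0 = u_0v_0 - v_0u_0$ (the $m=0$ case of~\eqref{commutativity}) and apply both sides to $v$; this holds on the nose, not merely modulo $C_2(V)$, which matches the statement. For $(a_{-1}b)_{-1}v\equiv a_{-1}(b_{-1}v)\bmod C_2(V)$, I would start from the displayed iterate formula $(u_{-1}v)_{-1} = \sum_{i\geq 0}\bigl(u_{-1-i}v_{-1+i} + v_{-2-i}u_i\bigr)$ (the $m=-1$ case of~\eqref{associativity}), apply it to $w=v$, and observe that the $i=0$ term of the first sum is $a_{-1}(b_{-1}v)$, while for $i\geq 1$ the operator $a_{-1-i}$ produces an element of $C_2(V)$ by Remark~\ref{dv}(1), and every term $b_{-2-i}(a_i v)$ in the second sum (for $i\geq 0$) likewise lies in $C_2(V)$; hence all but one term die in the quotient. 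Finally, for $(a_{-1}b)_0 v\equiv a_{-1}(b_0v) + b_{-1}(a_0v)\bmod C_2(V)$, I would use the $m=-1$, $n=0$ specialization of~\eqref{associativity}, namely $(u_{-1}v)_0 w = \sum_{i\geq 0}\bigl(u_{-1-i}v_{i}w + v_{-1-i}u_i w\bigr)$, apply it to $w=v$, and discard all terms with $i\geq 1$ since those carry a factor $u_{-1-i}$ or $v_{-1-i}$ with $1+i\geq 2$, landing in $C_2(V)$ by Remark~\ref{dv}(1); the surviving $i=0$ terms are exactly $a_{-1}(b_0v) + b_{-1}(a_0v)$.

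Having the four relations, the Poisson-algebra claim follows formally: the associativity and commutativity of the induced $(-1)$-product give $V/C_2(V)$ the structure of a commutative associative algebra (with unit the image of ${\bf 1}$, since ${\bf 1}_{-1}v = v$), the $(0)$-product is a Lie bracket by the first two relations (antisymmetry from $a_0b\equiv -b_0a$, and the Jacobi identity from $(a_0b)_0v = a_0(b_0v)-b_0(a_0v)$ after symmetrizing), and the last relation is precisely the Leibniz/derivation compatibility making the bracket act by derivations of the commutative product. I expect the main obstacle to be purely bookkeeping: correctly extracting the right finite specializations of~\eqref{commutativity} and~\eqref{associativity}, keeping track of which index ranges contribute negative enough modes to be absorbed into $C_2(V)$, and confirming well-definedness on the quotient via Remark~\ref{dv}(2)—there is no conceptual difficulty once the vanishing pattern is identified. (One could alternatively cite~\cite{Zh} directly, but the computation above is short enough to include.)
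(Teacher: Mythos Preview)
The paper does not supply its own proof of Proposition~\ref{c2v}; it simply cites Zhu~\cite{Zh} and moves on. Your argument is correct and is precisely the standard derivation one would give: specialize~\eqref{commutativity} and~\eqref{associativity} and kill every mode $u_{-n}$ with $n\geq 2$ via Remark~\ref{dv}. One small omission: you invoke ``commutativity of the induced $(-1)$-product'' in the last paragraph, but that congruence $a_{-1}b\equiv b_{-1}a\bmod C_2(V)$ is not among the four you verified; it comes from~\eqref{u-1v} by the same $\mathcal{D}(V)\subseteq C_2(V)$ argument you used for antisymmetry of the bracket, so you should say so explicitly.
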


\smallskip

\begin{cor}\label{a0-1a} Let $a\in V$. We have 
\begin{enumerate}
\item $a_0a_0....a_0a\in C_2(V)$.
\item Let $n_1,..., n_t\in\{0,-1\}$ such that the number of $n_i$ that equals to $-1$ is $k$ and $k$ is strictly less than $t$. Then 
$$a_{n_1}a_{n_2}...a_{n_t}a\equiv \underbrace{a_{-1}....a_{-1}}_{k\text{ terms}}a_0a_0...a_0a\mod C_2(V)\equiv 0\mod C_2(V).$$
\end{enumerate}
\end{cor}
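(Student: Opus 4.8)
The plan is to carry out both parts inside the commutative Poisson algebra $P=V/C_2(V)$ supplied by Proposition~\ref{c2v}, writing $\alpha$ for the image of $a$ and recalling (Remark~\ref{dv}(2)) that both $a_0$ and $a_{-1}$ map $C_2(V)$ into itself, so that they induce maps on $P$.

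For part (1), I would first record that $a_0a\in C_2(V)$: this is immediate from \eqref{u0u}, which places $u_0u$ in $\mathcal D(V)$, together with $\mathcal D(V)\subseteq C_2(V)$ from Remark~\ref{dv}(1) (equivalently, skew-symmetry $a_0a\equiv -a_0a\pmod{C_2(V)}$ from Proposition~\ref{c2v} forces $2a_0a\in C_2(V)$). Since $a_0a\in C_2(V)$, Remark~\ref{dv}(2) gives $a_0(a_0a)\in C_2(V)$, and an induction on the number of factors yields $\underbrace{a_0\cdots a_0}_{m}a\in C_2(V)$ for every $m\ge 1$.

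For part (2), the crux is that the induced maps $L_{-1},L_0\colon P\to P$, $L_{-1}(\bar b)=\overline{a_{-1}b}$ and $L_0(\bar b)=\overline{a_0 b}$, commute. To see this I would compute, for $b\in V$: by skew-symmetry $a_0(a_{-1}b)\equiv -(a_{-1}b)_0a$, then by the Leibniz-type relation of Proposition~\ref{c2v} $(a_{-1}b)_0a\equiv a_{-1}(b_0a)+b_{-1}(a_0a)\pmod{C_2(V)}$; the term $b_{-1}(a_0a)$ lies in $C_2(V)$ by part~(1) and Remark~\ref{dv}(2), while $b_0a\equiv -a_0b\pmod{C_2(V)}$, so (using once more that $a_{-1}$ preserves $C_2(V)$) $a_{-1}(b_0a)\equiv -a_{-1}(a_0b)\pmod{C_2(V)}$. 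Stringing these together gives $a_0(a_{-1}b)\equiv a_{-1}(a_0b)\pmod{C_2(V)}$, i.e. $L_0L_{-1}=L_{-1}L_0$ on $P$. Granting this, for any $n_1,\dots,n_t\in\{0,-1\}$ with exactly $k$ of them equal to $-1$, iterating the definitions yields $\overline{a_{n_1}\cdots a_{n_t}a}=L_{n_1}\cdots L_{n_t}\alpha$, and since the operators $L_{n_i}$ pairwise commute the composition can be reordered to $L_{-1}^{k}L_0^{t-k}\alpha$, which is exactly the class modulo $C_2(V)$ of $\underbrace{a_{-1}\cdots a_{-1}}_{k}\underbrace{a_0\cdots a_0}_{t-k}a$; this is the first claimed congruence. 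Finally, because $k<t$ there is at least one factor $a_0$, so $\underbrace{a_0\cdots a_0}_{t-k}a\in C_2(V)$ by part~(1), and applying the remaining $k$ copies of $a_{-1}$ keeps the result in $C_2(V)$ by Remark~\ref{dv}(2); hence the whole expression is $\equiv 0\pmod{C_2(V)}$.

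The only genuine content is the commutation $L_{-1}L_0=L_0L_{-1}$ on $V/C_2(V)$, and even that is a short consequence of Proposition~\ref{c2v}; I expect the main point requiring care to be the bookkeeping — verifying that each "error term" appearing along the way (such as $b_{-1}(a_0a)$ or $a_{-1}c$ with $c\in C_2(V)$) really does land in $C_2(V)$ — rather than any conceptual obstacle.
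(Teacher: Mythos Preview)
Your proof is correct and is essentially a careful unpacking of the paper's one-line argument, which merely cites \eqref{u0u}, Remark~\ref{dv}, and Proposition~\ref{c2v}; you use exactly these three ingredients in the natural way. One minor simplification: once you work in the Poisson algebra $P$, the commutation $L_0L_{-1}=L_{-1}L_0$ is just the Leibniz rule $\{\alpha,\alpha\cdot\bar b\}=\{\alpha,\alpha\}\cdot\bar b+\alpha\cdot\{\alpha,\bar b\}=\alpha\cdot\{\alpha,\bar b\}$, so the detour through skew-symmetry is not needed.
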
 
\begin{proof} These follow immediately from (\ref{u0u}), Remark \ref{dv}, and Proposition \ref{c2v}.
\end{proof}

\vspace{0.5cm}

\subsection{Examples of Vertex Algebras}

\smallskip

\begin{ex} {\bf Commutative Vertex Algebras}

\smallskip

A commutative vertex algebra is a vertex algebra $V$ such that $$[Y(u,z_1),Y(v,z_2)]=0$$ for all $u,v\in V$. Equivalently, $V$ is commutative if and only if $u_{n}v=0$ for all $u,v\in V$, $n\geq 0$. If $V$ is a commutative vertex algebra then $V$ is an honest commutative associative algebra with the product defined by $u\cdot v=u_{-1}v$ for $u,v\in V$, and with $\bf{1}$ as the identity element. Furthermore, the operator $\mathcal{D}$ of $V$ is a derivation and $Y(u,z)v=(e^{z\mathcal{D}}u )v$ for $u,v\in V$.

Also, for any commutative associative algebra $A$ with identity 1 equipped with a derivation $\partial$. $A$ carries the structure of a commutative vertex algebra where $Y$ is defined by $Y(a,z)=(e^{z\partial}a)b$ for $a,b\in A$. 
\end{ex}

\smallskip

\begin{ex} {\bf Heisenberg Vertex Operator Algebras} 

Let $\mathfrak{h}$ be a vector space equipped with a symmetric nondegenerate bilinear form $(\cdot,\cdot)$. We will identity $\mathfrak{h}$ with its dual $\mathfrak{h}^*$. The corresponding affine Lie algebra $$\hat{\mathfrak{h}}=\mathfrak{h}\otimes \mathbb{C}[t,t^{-1}]\oplus\mathbb{C}c$$ has a Lie structure defined by 
$$[x\otimes t^m,y\otimes t^n]=(x,y)m\delta_{m+n,0}c \text{ for }x,y\in\mathfrak{h}, m,n\in\mathbb{Z},\text{ and }[c,\hat{\mathfrak{h}}]=0.$$
Consider the induced $\hat{\mathfrak{h}}$-module 
$$M(1)=U(\hat{\mathfrak{h}})\otimes_{U(\mathfrak{h}\otimes \mathbb{C}[t]\oplus\mathbb{C}c)}\mathbb{C}\cong S(\hat{\mathfrak{h}}^-)\text{ (linearly)},$$ 
$h\otimes \mathbb{C}[t]$ acting triviall on $\mathbb{C}$ and $c$ acting as 1. For $\alpha\in\mathfrak{h}$ and $n\in\mathbb{Z}$, we write $\alpha(n)$ for the operator $\alpha\otimes t^n$ and set $\alpha(z)=\sum_{n\in\mathbb{Z}}\alpha(n)z^{-n-1}$. We define $$Y(v,z)={ }^{\circ}_{\circ}\left(\frac{1}{(n_1-1)!}\left(\frac{d}{dz}\right)^{n_1-1}\alpha_1(z)\right)...\left(\frac{1}{(n_k-1)!}\left(\frac{d}{dz}\right)^{n_k-1}\alpha_k(z)\right){ }^{\circ}_{\circ}$$ where $v=\alpha_1(-n_1)....\alpha_k(-n_k)\otimes 1\in M(1)$ for $\alpha_1,...,\alpha_k\in\mathfrak{h}$, $n_1,...,n_k$ are positive integers and where we use a normal ordering procedure, indicated by open colons, which signify that the expression is to be reordered if necessary so that all the operators $\alpha(n)$  ($\alpha\in\mathfrak{h},n<0$) are to be placed to the left of all the operator $\alpha(n)$ ($\alpha\in\mathfrak{h}, n\geq 0$) before the expression is evaluated . Extend this definition to all $v\in M(1)$ by linearity. Set ${\bf 1}=1$, $\omega=\frac{1}{2}\sum_{i=1}^d\beta^i(-1)^2\otimes 1\in M(1)$ where $\{\beta^i~|~1\leq i\leq d\}$ is an orthonormal basis of $\mathfrak{h}$. Then 
\begin{prop}(\cite{FLM2}) The space $M(1)$ is a simple vertex operator algebra. Moreover, $M(1)$ does not satisfy the $C_2$-condition. If $\dim\mathfrak{h}=d$, then $M(1)/C_2(V)\cong \mathbb{C}[x_1, ....,x_d]$ as commutative associative algebra. \end{prop}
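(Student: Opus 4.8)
The plan is to handle the three claims in turn, with the isomorphism $M(1)/C_2(M(1))\cong\mathbb{C}[x_1,\dots,x_d]$ as the main point; the failure of $C_2$-cofiniteness will then be immediate. I take the vertex operator algebra structure on $M(1)$ as established in \cite{FLM2}, so for the first claim only simplicity needs an argument. Let $I\subseteq M(1)$ be a nonzero ideal and choose $0\neq w\in I$ of minimal conformal weight $n$. Since $\alpha(-1)\mathbf{1}\in M(1)$ with $Y(\alpha(-1)\mathbf{1},z)=\alpha(z)$, we have $(\alpha(-1)\mathbf{1})_m=\alpha(m)$, hence $\alpha(m)w\in I$ for every $\alpha\in\mathfrak{h}$ and $m\geq 1$; each such vector has conformal weight $<n$. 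If they all vanish then $w$ lies in the common kernel of the annihilation operators $\alpha(m)$, $m\geq 1$, acting on $M(1)\cong S(\hat{\mathfrak{h}}^-)$, which is $\mathbb{C}\mathbf{1}$; otherwise some nonzero $\alpha(m)w$ contradicts minimality of $n$. Thus $\mathbf{1}\in I$, and since $v=v_{-1}\mathbf{1}$ for all $v\in M(1)$ we conclude $I=M(1)$, so $M(1)$ is simple.

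For the isomorphism I fix an orthonormal basis $\beta^1,\dots,\beta^d$ of $\mathfrak{h}$ and use the Poincar\'e--Birkhoff--Witt basis of $M(1)$ consisting of the monomials $\beta^{i_1}(-n_1)\cdots\beta^{i_k}(-n_k)\mathbf{1}$; I call such a monomial \emph{reduced} if $n_1=\dots=n_k=1$. The reduced monomials correspond bijectively to the monomials of $\mathbb{C}[x_1,\dots,x_d]$ via $\beta^{i_1}(-1)\cdots\beta^{i_k}(-1)\mathbf{1}\leftrightarrow x_{i_1}\cdots x_{i_k}$. The crux is that $C_2(M(1))$ is exactly the span of the non-reduced monomials. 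For one inclusion, reordering the (commuting) modes we may assume $n_1\geq 2$, and then $\beta^{i_1}(-n_1)\cdots\beta^{i_k}(-n_k)\mathbf{1}=(\beta^{i_1}(-1)\mathbf{1})_{-n_1}\big(\beta^{i_2}(-n_2)\cdots\beta^{i_k}(-n_k)\mathbf{1}\big)$ lies in $C_2(M(1))$ by Remark \ref{dv}(1). For the reverse inclusion, by bilinearity it suffices to treat $u_{-2}v$ with $u,v$ monomials, of lengths $p,q$ and conformal weights $N_u,N_v$ respectively. Then $u_{-2}v$ is homogeneous of weight $N_u+N_v+1$, while expanding $Y(u,z)$ as a normal-ordered product shows every PBW term of $u_{-2}v$ has length at most $p+q$: each nonnegative-index mode produced by $Y(u,z)$ removes at most one generator of $v$, while each negative-index mode adds exactly one. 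Since $p\leq N_u$ and $q\leq N_v$, and a monomial of weight $N$ has length at most $N$ with equality precisely for reduced monomials, every term of $u_{-2}v$ is non-reduced, so $u_{-2}v$ lies in the span of the non-reduced monomials.

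Granting the crux, the classes of the reduced monomials form a basis of $M(1)/C_2(M(1))$. By Proposition \ref{c2v} this quotient is a commutative associative algebra under the $(-1)$-product, and the identity $(\beta^i(-1)\mathbf{1})_{-1}w=\beta^i(-1)w$ shows the class of a reduced monomial is the product of the classes of its factors $\beta^{i_j}(-1)\mathbf{1}$. Hence $x_i\mapsto\beta^i(-1)\mathbf{1}+C_2(M(1))$ extends to an algebra homomorphism $\mathbb{C}[x_1,\dots,x_d]\to M(1)/C_2(M(1))$; it is surjective because the reduced classes span the target, and injective because, by the crux, any linear relation among reduced classes is a linear dependence among reduced monomials modulo non-reduced ones, hence trivial. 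This yields the asserted isomorphism, and therefore $\dim M(1)/C_2(M(1))=\dim\mathbb{C}[x_1,\dots,x_d]=\infty$ whenever $d=\dim\mathfrak{h}\geq 1$, so $M(1)$ does not satisfy the $C_2$-cofiniteness condition.

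The step I expect to require the most care is the inclusion $C_2(M(1))\subseteq\langle\text{non-reduced monomials}\rangle$, specifically the length-versus-weight bookkeeping: after normal-ordering $Y(u,z)$ and applying a term to a monomial $v$, one must verify that the nonnegative-index modes cannot raise the length (an $\alpha(m)$ with $m\geq 1$ contracts against at most one generator of $v$, and $\alpha(0)$ annihilates $\mathbf{1}$), whereas the negative-index modes contribute exactly one generator apiece; combined with $p\leq N_u$ and $q\leq N_v$ this forces the length of every resulting monomial to be strictly less than its weight. The remaining ingredients --- the VOA axioms, the Poisson structure supplied by Proposition \ref{c2v}, and the identification of the PBW basis --- are either standard or routine.
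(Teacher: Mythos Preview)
The paper does not prove this proposition: it is stated as background, attributed to \cite{FLM2}, with no argument given. So there is no ``paper's own proof'' to compare against; your task was really to supply a proof where the paper supplies none.

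Your argument is correct and complete. The simplicity argument via a minimal-weight element and the annihilation operators $\alpha(m)$, $m\geq 1$, is the standard one. For the identification of $C_2(M(1))$ with the span of non-reduced PBW monomials, both inclusions are handled properly: the easy inclusion uses $u_{-n}v\in C_2(V)$ for $n\geq 2$, and for the harder inclusion your weight-versus-length bookkeeping is sound --- each of the $p$ modes coming from $Y(u,z)$ can raise the length of a monomial by at most one, so every PBW constituent of $u_{-2}v$ has length $\leq p+q\leq N_u+N_v<N_u+N_v+1$, hence is non-reduced. The algebra isomorphism and the failure of $C_2$-cofiniteness then follow immediately. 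The only remark I would add is that the third assertion (and hence the second) is folklore rather than literally in \cite{FLM2}; your direct argument is exactly what is needed to justify it.
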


\begin{rmk} The vertex operator algebra $M(1)$ is generated by $\{\beta_i(-1){\bf 1}~|~1\leq i\leq d\}$. When we identify $M(1)$ with a symmetric algebra $\mathbb{C}[x_{i,n}~|~i=1,...,d,~n=1,2,3,.....]$, it is easy to see that all vertex operators are built from the operators $x_{i,n}$ and $\frac{\partial}{\partial x_{i,n}}$.
\end{rmk}
\end{ex}

\smallskip

\begin{ex} {\bf Lattice Vertex Operator Algebras} 

Let $L$ be a finite-rank free abelian group equipped with a symmetric nondegenerate bilinear form $(~,~)$ such that $(\alpha,\alpha)\in 2\mathbb{Z}_+$ for all $\alpha\neq 0\in L$. Here $\mathbb{Z}_+$ is a set of positive integers. We set $\mathfrak{h}=\mathbb{C}\otimes_{\mathbb{Z}}L$ and extend the $\mathbb{Z}$-form $(\cdot, \cdot)$ of $L$ to $\mathfrak{h}$. 

Let $\hat{L}$ be the canonical central extension of $L$ by the cyclic group $\langle \pm 1\rangle$: $1\rightarrow\langle\pm 1\rangle\rightarrow \hat{L}\stackrel{-}{\rightarrow}L\rightarrow 1$ with the commutator map $c(\alpha,\beta)=(-1)^{(\alpha,\beta)}$ for $\alpha,\beta\in L$. Form the induced $\hat{L}$-module $\mathbb{C}\{L\}=\mathbb{C}[\hat{L}]\otimes_{\langle\pm 1\rangle}\mathbb{C}~(\cong \mathbb{C}[L]\text{ (linearly)}),$ where $\mathbb{C}[~\cdot~ ]$ denotes the group algebra and $-1$ acts on $\mathbb{C}$ as multiplication by $-1$. 

The underlying vector space for the vertex operator algebra $V_L$ is $$V_L=M(1)\otimes_{\mathbb{C}}\mathbb{C}\{L\}(\cong S(\hat{\mathfrak{h}}^-)\otimes \mathbb{C}[L] \text{ (linearly)}).$$ 

\begin{prop}(\cite{FLM2, DLM}) $V_L$ is a simple vertex operator algebra. Moreover, $V_L$ satisfies the $C_2$-condition. 
\end{prop}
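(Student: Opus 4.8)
The vertex-algebra structure on $V_L$, the $L(0)$-grading with finite-dimensional homogeneous pieces, the conformal vector, and the explicit description of the operators $Y(v,z)$ — in particular of $Y(e^{\alpha},z)$ through the exponentials $E^{\pm}(\mp\alpha,z)$ and the $2$-cocycle $\epsilon$ of $\hat L$ — are all established in \cite{FLM2}; the plan is to take these as given and prove the two assertions of the proposition. The one computation I will use repeatedly is that, writing $E^-(-\alpha,z)=\exp\!\big(\sum_{n\ge 1}\tfrac{\alpha(-n)}{n}z^{n}\big)=\sum_{k\ge 0}p_k(\alpha)z^{k}$ (so $p_0(\alpha)=1$ and $p_k(\alpha){\bf 1}$ is a degree-$k$ Heisenberg descendant of $e^{0}$), one has
\[
Y(e^{\alpha},z)e^{\beta}=\epsilon(\alpha,\beta)\,z^{(\alpha,\beta)}\,E^-(-\alpha,z)\,e^{\alpha+\beta},\qquad\text{so}\qquad (e^{\alpha})_{m}\,e^{\beta}=\epsilon(\alpha,\beta)\,p_{-m-1-(\alpha,\beta)}(\alpha)\,e^{\alpha+\beta}
\]
for $m\le -1-(\alpha,\beta)$ and $(e^{\alpha})_m e^\beta=0$ otherwise (\cite{FLM2}).

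For simplicity I would argue as follows. Write $V_L=\bigoplus_{\alpha\in L}M(1,\alpha)$ with $M(1,\alpha):=M(1)\otimes e^{\alpha}$, which is irreducible as a module over the Heisenberg subalgebra $M(1)\subseteq V_L$. Let $I\neq\{0\}$ be an ideal; it suffices to show ${\bf 1}\in I$. Choose $0\neq v\in I$ and write $v=\sum_{\alpha\in F}v_\alpha$ with $F$ finite and each $v_\alpha\in M(1,\alpha)$ nonzero. The commuting zero modes $\gamma(0)=(\gamma(-1){\bf 1})_{0}$ $(\gamma\in\mathfrak h)$ act on $M(1,\alpha)$ by the scalar $(\gamma,\alpha)$, and since $(\,\cdot\,,\,\cdot\,)$ is nondegenerate the tuples $\big((\gamma_i,\alpha)\big)_i$ separate the $\alpha\in F$; so applying a suitable polynomial in the $\gamma(0)$ — an operation preserving $I$ — yields a nonzero element of $I\cap M(1,\alpha_0)$ for some $\alpha_0$. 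Then $I\cap M(1,\alpha_0)$ is a nonzero $M(1)$-submodule of the irreducible module $M(1,\alpha_0)$, hence equals $M(1,\alpha_0)\ni e^{\alpha_0}$; and the top mode in the displayed identity gives $(e^{-\alpha_0})_{(\alpha_0,\alpha_0)-1}\,e^{\alpha_0}=\epsilon(-\alpha_0,\alpha_0){\bf 1}=\pm{\bf 1}\in I$. Thus $I=V_L$.

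For $C_2$-cofiniteness, set $A:=V_L/C_2(V_L)$, a commutative associative algebra with $\bar u\cdot\bar v=\overline{u_{-1}v}$ (Proposition \ref{c2v}). Fix a $\mathbb Z$-basis $\gamma_1,\dots,\gamma_d$ of $L$. Then $V_L$ is spanned by the vectors $\gamma_{i_1}(-n_1)\cdots\gamma_{i_r}(-n_r)e^{\beta}=(\gamma_{i_1}(-1){\bf 1})_{-n_1}\cdots(\gamma_{i_r}(-1){\bf 1})_{-n_r}(e^{\beta})_{-1}{\bf 1}$ with $n_j\ge1$, $\beta\in L$; by Remark \ref{dv} any such vector with some $n_j\ge2$ lies in $C_2(V_L)$ (work outward from the outermost index with $n_j\ge2$: $u_{-n_j}w\in C_2(V_L)$ by Remark \ref{dv}(1), and the remaining modes $u_{-n}$, $n\ge1$, keep the result in $C_2(V_L)$ by Remark \ref{dv}). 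Hence $A$ is spanned by the monomials $\bar\gamma_1^{\,a_1}\cdots\bar\gamma_d^{\,a_d}\,\bar e^{\,\beta}$ ($a_i\ge0$, $\beta\in L$). Now invoke the displayed identity. First, $(e^{\gamma_i})_{-2}e^{-\gamma_i}=\epsilon(\gamma_i,-\gamma_i)\,p_{(\gamma_i,\gamma_i)+1}(\gamma_i){\bf 1}\in C_2(V_L)$, and its difference from $\tfrac{\pm1}{((\gamma_i,\gamma_i)+1)!}\gamma_i(-1)^{(\gamma_i,\gamma_i)+1}{\bf 1}$ is a sum of monomials each containing some $\gamma_i(-j)$ with $j\ge2$, hence lies in $C_2(V_L)$ by the previous step; so $\gamma_i(-1)^{(\gamma_i,\gamma_i)+1}{\bf 1}\in C_2(V_L)$, i.e. $\bar\gamma_i^{\,(\gamma_i,\gamma_i)+1}=0$ in $A$. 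Second, when $(\alpha,\beta)\ge1$ the identity reads $(e^{\alpha})_{-1-(\alpha,\beta)}e^{\beta}=\epsilon(\alpha,\beta)e^{\alpha+\beta}$ with $-1-(\alpha,\beta)\le-2$, so $e^{\alpha+\beta}\in C_2(V_L)$; thus $\bar e^{\,\gamma}=0$ in $A$ whenever $\gamma=\alpha+\beta$ for some $\alpha,\beta\in L$ with $(\alpha,\beta)\ge1$. A covering-radius estimate shows this applies to every $\gamma$ with $(\gamma,\gamma)$ above some constant $N_0$: choose $\alpha\in L$ within the covering radius of $\tfrac12\gamma$ and set $\beta=\gamma-\alpha$, so $(\alpha,\beta)=\tfrac12\big((\gamma,\gamma)-(\alpha,\alpha)-(\beta,\beta)\big)\ge\tfrac14(\gamma,\gamma)-O\!\big(\sqrt{(\gamma,\gamma)}\big)\to\infty$. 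As $L$ is positive definite, only finitely many $\gamma$ satisfy $(\gamma,\gamma)\le N_0$, so $A$ is spanned by the finite set $\{\bar\gamma_1^{\,a_1}\cdots\bar\gamma_d^{\,a_d}\,\bar e^{\,\gamma}:0\le a_i\le(\gamma_i,\gamma_i),\ (\gamma,\gamma)\le N_0\}$, whence $\dim_{\mathbb C}A<\infty$.

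The genuinely fiddly points are bookkeeping rather than conceptual: pinning down the cocycle signs and the exact mode indices in the displayed identity; checking that ``some $n_j\ge2$ forces membership in $C_2(V_L)$'' really is preserved under the remaining modes in every order (it is, by the two clauses of Remark \ref{dv}); and the elementary lattice geometry producing the split $\gamma=\alpha+\beta$ with $(\alpha,\beta)\ge1$ for long $\gamma$. The conceptual heart is the single observation that the lattice vertex operators $Y(e^{\alpha},z)$ impose, modulo $C_2(V_L)$, both the nilpotence of each Heisenberg generator $\bar\gamma_i$ and the vanishing of all but finitely many charge sectors $\bar e^{\,\gamma}$ — exactly the extra relations absent in $M(1)$ alone, where $M(1)/C_2(M(1))\cong\mathbb C[x_1,\dots,x_d]$ is infinite-dimensional.
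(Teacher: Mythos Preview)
The paper does not give its own proof of this proposition; it is stated as a citation of \cite{FLM2, DLM} and left at that. So there is nothing in the paper to compare your argument against directly.

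That said, your proof is correct and is essentially the standard argument one finds in the cited literature. The simplicity argument---separating charge sectors via the commuting zero modes $\gamma(0)$, using irreducibility of each $M(1,\alpha)$ over the Heisenberg subalgebra, and then hitting $e^{\alpha_0}$ with the top mode of $e^{-\alpha_0}$ to land on ${\bf 1}$---is the usual route. For $C_2$-cofiniteness, your reduction of the spanning set modulo $C_2(V_L)$ using Remark~\ref{dv}, the nilpotence of each $\bar\gamma_i$ extracted from $(e^{\gamma_i})_{-2}e^{-\gamma_i}\in C_2(V_L)$, and the vanishing of $\bar e^{\,\gamma}$ for long $\gamma$ via the splitting $\gamma=\alpha+\beta$ with $(\alpha,\beta)\geq 1$, are all sound; the covering-radius estimate you give for the splitting is a clean way to make that last step uniform. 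The only places to tidy are cosmetic: the phrase ``outermost index'' is ambiguous (either reading works, since $u_{-n}$ with $n\geq 2$ lands in $C_2$ regardless and $u_{-1}$ preserves $C_2$ by Remark~\ref{dv}(2)), and the asymptotic $\tfrac14(\gamma,\gamma)-O(\sqrt{(\gamma,\gamma)})$ could be replaced by the explicit bound $\tfrac14\|\gamma\|^2-R\|\gamma\|-R^2$ with $R$ the covering radius.
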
 
\end{ex}

\smallskip

\begin{ex} {\bf Vertex Operator Algebras associated with the Highest Weight Representations of Affine Lie Algebras}

Let $\mathfrak{g}$ be a finite-dimensional simple Lie algebra, ${\bf h}$ its Cartan subalgebra, and $\langle\cdot,\cdot\rangle$ normalized killing form such that the square length of a long root is 2. Let $\hat{\mathfrak{g}}=\mathfrak{g}\otimes \mathbb{C}[t,t^{-1}]\oplus\mathbb{C} c$ be the affine Lie algebra. Let $l$ be a complex number such that $l\neq -h\check{ }$ where $h\check{ }$ is the dual Coxeter number of $\mathfrak{g}$. Let $\mathbb{C}_l$ be the one-dimensional $(\mathfrak{g}\otimes \mathbb{C}[t]+\mathbb{C}c)$-module on which $c$ acts as scalar $l$ and $\mathfrak{g}\otimes \mathbb{C}[t]$ acts as zero. Form the generalized Verma $\hat{\mathfrak{g}}$-module $M_{\mathfrak{g}}(l,0)=U(\hat{\mathfrak{g}})\otimes_{U(\mathfrak{g}\otimes \mathbb{C}[t]+\mathbb{C}c)}\mathbb{C}_l$. 

\begin{prop}(\cite{DL, DLM, FZ}) \ \ 

\begin{enumerate}
\item If $l\neq -h\check{ }$ then $M(l,0)$ is a vertex operator algebra. However, $M(l,0)$ does not satisfy the $C_2$-condition. 
\item Let $I(l,0)$ be the unique maximal submodule of $M(l,0)$. Let $L(l,0)$ be the corresponding quotient. If $l\neq 0$ and $l\neq h\check{ }$ then $I(l,0)$ is an ideal of $M(l,0)$ and $L(l,0)$ is a simple vertex operator algebra. Moreover, $L(l,0)$ satisfies the $C_2$-condition.
\end{enumerate}
\end{prop}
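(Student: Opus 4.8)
The plan for part~(1) is to build the vertex operator structure of $M(l,0)$ from the currents $a(z)=\sum_{n\in\mathbb{Z}}a(n)z^{-n-1}$, $a\in\mathfrak{g}$. First I would set $Y(a(-1){\bf 1},z)=a(z)$ and note that the family $\{a(z)\mid a\in\mathfrak{g}\}$ is mutually local, with singular part governed by the affine relations $[a(m),b(n)]=[a,b](m+n)+l\,\langle a,b\rangle\,m\,\delta_{m+n,0}$; the reconstruction (local systems) theorem then extends $Y$ uniquely to a vertex algebra structure in which $Y\big(a_1(-n_1)\cdots a_k(-n_k){\bf 1},z\big)$ is the corresponding normal-ordered product of the $\frac{1}{(n_i-1)!}\big(\tfrac{d}{dz}\big)^{n_i-1}a_i(z)$, just as in the Heisenberg example above. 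Because $l\neq -h\check{ }$, I would then introduce the Sugawara vector
\[
\omega=\frac{1}{2(l+h\check{ })}\sum_i u^i(-1)u_i(-1){\bf 1},
\]
where $\{u_i\}$, $\{u^i\}$ are dual bases of $\mathfrak{g}$ for $\langle\cdot,\cdot\rangle$, and check by the standard computation that $L(n)=\omega_{n+1}$ satisfy the Virasoro relations with $c_V=\frac{l\,\dim\mathfrak{g}}{l+h\check{ }}$, that $L(-1)=\mathcal{D}$, and that $L(0)$ gives $M(l,0)$ an $\mathbb{N}$-grading with $\dim M(l,0)_{(n)}<\infty$ and $M(l,0)_{(0)}=\mathbb{C}{\bf 1}$. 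This denominator is the only place where the hypothesis $l\neq -h\check{ }$ is used.

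For the failure of the $C_2$-condition the plan is to use $M(l,0)\cong S(\mathfrak{g}\otimes t^{-1}\mathbb{C}[t^{-1}])$ as a vector space together with Remark~\ref{dv}: since $a(-n)v=(a(-1){\bf 1})_{-n}v\in C_2(V)$ for $n\geq 2$, the subspace $C_2(M(l,0))$ contains every ordered PBW monomial in the $u_i(-n)$ involving some index $n\geq 2$, while the $(-1)$-products of the $u_i(-1){\bf 1}$ span a copy of $S(\mathfrak{g})$; the same bookkeeping as in the $M(1)$ case shows there are no further relations, so $M(l,0)/C_2(M(l,0))\cong S(\mathfrak{g})$ is a polynomial ring in $\dim\mathfrak{g}\geq 3$ variables, hence infinite-dimensional.

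For part~(2), I would realize $I(l,0)$ as the sum of all proper graded $\hat{\mathfrak{g}}$-submodules of $M(l,0)$; this sum is still proper because each such submodule meets $\mathbb{C}{\bf 1}$ trivially, so it is the unique maximal one, and $L(l,0):=M(l,0)/I(l,0)$ is the irreducible graded quotient (here $l\neq 0$ is what rules out the degenerate case $L(l,0)=\mathbb{C}{\bf 1}$). The key point is that $I(l,0)$ is automatically a vertex-algebra ideal: it is a $\hat{\mathfrak{g}}$-submodule, the modes of the generating fields $a(z)$ are precisely the operators $a(n)$, and by the iterate formula~(\ref{associativity}) the set $\{u\in M(l,0)\mid u_nI(l,0)\subseteq I(l,0)\ \text{for all }n\}$ is a vertex subalgebra containing ${\bf 1}$ and the generators $a(-1){\bf 1}$, hence all of $M(l,0)$; skew-symmetry $Y(u,z)v=e^{z\mathcal{D}}Y(v,-z)u$ then upgrades this to a two-sided ideal. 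Consequently $L(l,0)$ inherits a vertex operator algebra structure — the grading conditions, the conformal vector, and $L(-1)=\mathcal{D}$ all descend — and it is simple because $I(l,0)$ is maximal.

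The part I expect to be the real work is the $C_2$-cofiniteness of $L(l,0)$. The plan, following \cite{DL, DLM, FZ}, is to pass to the commutative Poisson algebra $M(l,0)/C_2(M(l,0))\cong S(\mathfrak{g})$ and to identify the image $\overline{I(l,0)}$ of $I(l,0)$ there; then $L(l,0)/C_2(L(l,0))\cong S(\mathfrak{g})/\overline{I(l,0)}$, and one must show this quotient is finite-dimensional. In the clean case (e.g. $l$ a positive integer) this succeeds because $I(l,0)$ is generated by a single singular vector $(e_\theta(-1))^{l+1}{\bf 1}$ with $e_\theta$ a highest-root vector, whose image in $S(\mathfrak{g})$ is $e_\theta^{\,l+1}$, and the ideal of $S(\mathfrak{g})$ generated by $e_\theta^{\,l+1}$ together with all its $\mathfrak{g}$-translates has zero set $\{0\}$, so the quotient is Artinian. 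The main obstacle is exactly this last step: one needs the singular vector (or at least its leading term in $S(\mathfrak{g})$) explicitly, plus an invariant-theoretic argument bounding $\dim\big(S(\mathfrak{g})/\overline{I(l,0)}\big)$. For the full range of $l$ in the statement I would defer to \cite{DL, DLM, FZ} for the precise description of $\overline{I(l,0)}$ and the finiteness estimate.
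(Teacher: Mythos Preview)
The paper does not give its own proof of this proposition: it is stated purely as a citation from \cite{DL, DLM, FZ} in the background examples section, with no argument supplied. There is therefore no proof in the paper to compare your proposal against.

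That said, your outline is a faithful account of the standard arguments in those references for part~(1) and for the ideal/simplicity claims in part~(2). One point worth flagging: the $C_2$-cofiniteness assertion for $L(l,0)$ as literally stated in the paper (for all $l\neq 0$, $l\neq h\check{ }$) is stronger than what the cited sources actually prove --- \cite{DLM} establishes it for positive integer level, exactly the ``clean case'' you single out. Your hesitation about the full range of $l$ is justified: for generic non-integral $l$ one has $I(l,0)=0$, so $L(l,0)=M(l,0)$, and then your own argument in part~(1) shows $C_2$-cofiniteness fails. So the obstacle you identify is real, not merely a matter of deferring details; the paper's hypothesis on $l$ in part~(2) appears to be imprecise relative to the literature it cites.
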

\end{ex}

\smallskip

\begin{ex} {\bf Vertex Operator Algebras associated with Virasoro algebras}

We denote the Virasoro algebra by $\mathcal{L}=\oplus_{n\in\mathbb{Z}}\mathbb{C}L_n\oplus\mathbb{C}c$ with relation $$[L_m,L_n]=(m-n)L_{m+n}+\delta_{m+n,0}\frac{m^3-m}{12}C,~[L_m,C]=0.$$ We set $\mathfrak{b}=(\oplus_{n\geq 1}\mathbb{C}L_n)\oplus(\mathbb{C}L_0\oplus\mathbb{C}C)$. 

For any complex number $c$, let $\mathbb{C}_c$ be a 1-dimensional $\mathfrak{b}$-module defined as follows: $L_n\cdot 1=0$ for $n\geq 0$, $C\cdot 1=c\cdot 1$. We set $$V(c,0)=U(\mathcal{L})\otimes_{U(\mathfrak{b})}\mathbb{C}_c.$$ 
$V(c,0)$ has a unique maximal proper submodule $J(c,0)$. Let $L(c,0)$ be the unique irreducible quotient module of $V(c,0)$. It is well known that $\overline{V(c,0)}=V(c,0)/(U(\mathcal{L})L_{-1}1\otimes 1)$ is a vertex operator algebra. Moreover, 

\begin{prop}(\cite{DLM, FZ}) \ \ 

\begin{enumerate}
\item $L(c,0)$ is the unique irreducible quotient vertex operator algebra of $\overline{V(c,0)}$.
\item For $m\geq 2$, set $c_m=1-\frac{6}{m(m+1)}$. The vertex operator algebra $L(c_m,0)$ satisfies the $C_2$-condition. \end{enumerate}
\end{prop}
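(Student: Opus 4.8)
\textbf{Part (1).} The plan is to transfer the known submodule structure of $V(c,0)$ to $\overline{V(c,0)}$. Since $\overline{V(c,0)}=V(c,0)/U(\mathcal{L})L_{-1}(1\otimes 1)$ is a \emph{nonzero} vertex operator algebra, the $\mathcal{L}$-module $U(\mathcal{L})L_{-1}(1\otimes 1)$ is a proper submodule of $V(c,0)$ and hence is contained in its unique maximal proper submodule $J(c,0)$. Therefore $\overline{J}:=J(c,0)/U(\mathcal{L})L_{-1}(1\otimes 1)$ is the unique maximal proper $\mathcal{L}$-submodule of $\overline{V(c,0)}$; and because $\overline{V(c,0)}$ is generated over $\mathbf{1}$ by the conformal vector $\omega$, its vertex algebra ideals coincide with its $\mathcal{L}$-submodules (modes of any element lie in the associative algebra generated by the modes of $\omega$), so $\overline{J}$ is also its unique maximal proper ideal. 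Consequently $L(c,0)=\overline{V(c,0)}/\overline{J}=V(c,0)/J(c,0)$ is the unique irreducible quotient vertex operator algebra of $\overline{V(c,0)}$.

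\textbf{Part (2).} The strategy is to combine a null-vector input with the $C_2$-reduction packaged in Remark~\ref{dv} and Proposition~\ref{c2v}. Write $W=\overline{V(c_m,0)}$ for short; it is spanned by the vectors $L(-n_1)\cdots L(-n_k)\mathbf{1}$ with $n_1\geq\cdots\geq n_k\geq 2$, and $L(-n)=\omega_{-n+1}$ in the normalization $Y(\omega,z)=\sum_{n}\omega_{n}z^{-n-1}$. By Remark~\ref{dv}(1), $\omega_{-n+1}x\in C_2(W)$ for every $n\geq 3$ and every $x\in W$, and by Remark~\ref{dv}(2) the operators $\omega_{-1}=L(-2)$ and $\omega_{0}=L(-1)$ send $C_2(W)$ into itself; hence any spanning vector $L(-n_1)\cdots L(-n_k)\mathbf{1}$ in which some $n_i\geq 3$ lies in $C_2(W)$. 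Therefore $W/C_2(W)$ is spanned by the images $\overline{\omega}^{\,k}$ of $L(-2)^{k}\mathbf{1}$ $(k\geq 0)$, so by Proposition~\ref{c2v} it is a quotient of the polynomial algebra $\mathbb{C}[\overline{\omega}]$; the same is true of $L(c_m,0)/C_2(L(c_m,0))$, since $L(c_m,0)$ is a quotient of $W$.

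It remains to prove that $\overline{\omega}$ is nilpotent in $L(c_m,0)/C_2(L(c_m,0))$. For this I would invoke the Feigin--Fuchs analysis of $V(c_m,0)$ at $c_m=1-\tfrac{6}{m(m+1)}$: there is a nonzero singular vector
\[S=\sum_{\lambda}a_{\lambda}\,L(-\lambda_1)L(-\lambda_2)\cdots\mathbf{1}\in W,\]
of conformal weight $N=m(m-1)$, the sum being over partitions $\lambda=(\lambda_1\geq\lambda_2\geq\cdots)$ of $N$ with all parts $\geq 2$, and $S$ lies in the maximal proper submodule, so its image in $L(c_m,0)$ vanishes. Now $N$ is even, and the unique such partition with no part $\geq 3$ is $\lambda=(2,2,\dots,2)$ (with $N/2$ parts); by the reduction of the previous paragraph every other monomial in $S$ lies in $C_2$, so modulo $C_2(L(c_m,0))$ the relation $S=0$ becomes $a_{(2^{N/2})}\,\overline{\omega}^{\,N/2}=0$. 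Provided $a_{(2^{N/2})}\neq 0$, this forces $\overline{\omega}^{\,N/2}=0$, whence $L(c_m,0)/C_2(L(c_m,0))$ is a quotient of $\mathbb{C}[\overline{\omega}]/(\overline{\omega}^{\,N/2})$ and in particular is finite-dimensional; that is, $L(c_m,0)$ satisfies the $C_2$-cofiniteness condition.

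\textbf{Main obstacle.} The only step that is not routine bookkeeping with Remark~\ref{dv} and Proposition~\ref{c2v} is the claim that the coefficient $a_{(2^{N/2})}$ of $L(-2)^{N/2}\mathbf{1}$ in the level-$N$ Virasoro singular vector ($N=m(m-1)$) is nonzero; this is exactly the arithmetic reason why the minimal-model value $c_m$, rather than a generic $c$, yields $C_2$-cofiniteness. I expect to obtain it from the explicit Malikov--Feigin--Fuks formula for Virasoro singular vectors, re-expanding the leading $L(-1)^{N}$ term into $L(-n)$-monomials and tracking the surviving coefficient; the smallest cases can be checked by hand ($m=2$, where $S=\omega$; $m=3$, the level-$6$ Ising null vector, whose $L(-2)^{3}$-coefficient is nonzero). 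Everything else in the argument is formal.
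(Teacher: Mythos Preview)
The paper does not prove this proposition; it simply cites it from \cite{DLM, FZ} as background material for the examples section. There is thus no ``paper's own proof'' to compare against.

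That said, your sketch is essentially the standard argument one finds in the literature (notably \cite{DLM}). Part~(1) is routine. For Part~(2), your reduction showing that $L(c,0)/C_2(L(c,0))$ is a quotient of $\mathbb{C}[\overline{\omega}]$ is correct and is exactly how the problem is usually set up; the identification of the singular-vector level as $N=m(m-1)=(m-1)m$ for $c_m$ is also correct (this is the $(p,q)=(m,m+1)$ minimal series, with the nontrivial vacuum null vector at level $(p-1)(q-1)$). You have correctly isolated the one genuinely nontrivial input: the nonvanishing of the coefficient of $L(-2)^{N/2}\mathbf{1}$ in that singular vector. This is known---it can be extracted from the Feigin--Fuchs/Benoit--Saint-Aubin formulas or, more conceptually, from Wang's or Dong--Li--Mason's arguments---but it is not something you can get by formal manipulation alone, and your honest flagging of it as the ``main obstacle'' is appropriate. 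If you wanted to make the sketch self-contained you would need to supply that computation; otherwise, citing \cite{DLM} (or Wang's paper on rationality of the Virasoro minimal series) at that step is what the references do.
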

\end{ex}

\smallskip

\section{Mathieu-Zhao subspaces of Vertex Algebras}

In this section, we define Mathieu-Zhao subspaces of vertex algebras and study their properties. Also, we study relationships between Mathieu-Zhao subspaces of vertex algebras $V$ and Mathieu-Zhao subspaces of commutative associative algebras $V/C_2(V)$. 

\begin{dfn} Let $V$ be a vertex algebra. Let $M$ be a $\mathbb{C}$-subspace of $V$. We define the radical, left-strong radical, right-strong radcial, and strong radical of $M$ with respect to $0^{th}$ and $(-1)^{th}$-products in the following way. 
\begin{enumerate}
\item The radical of $M$ with respect to $0^{th}$ and $(-1)^{th}$-products is the set
\begin{eqnarray*}
r_{0,-1}(M)=\{v\in V&|&\text{ there exists }m\geq 0\text{ such that }v_{n_1}v_{n_2}...v_{n_t}v\in M\\
&&\text{ for all }t\geq m,~n_1,..., n_t\in\{0,-1\}\}.
\end{eqnarray*}
\item The left-strong radical of $M$ with respect to $0^{th}$ and $(-1)^{th}$-products is the set 
\begin{eqnarray*}
lsr_{0,-1}(M)=\{v\in V&|&\text{for }b\in V, \text{there exists }m\geq 0\text{ such that }\\
&&b_sv_{n_1}v_{n_2}...v_{n_t}v\in M\text{ for all }t\geq m,\\
&&s, n_1,..., n_t\in\{0,-1\}\}.
\end{eqnarray*}
\item The right-strong radical of $M$ with respect to $0^{th}$ and $(-1)^{th}$-products is the set 
\begin{eqnarray*}
rsr_{0,-1}(M)=\{v\in V&|&\text{for }w\in V, \text{there exists }m\geq 0\text{ such that }\\
&&(v_{n_1}...v_{n_t}v)_nw\in M\text{ for all }t\geq m,\\
&&n, n_1,..., n_t\in\{0,-1\}\}.
\end{eqnarray*}
\item The strong radical of $M$ with respect to $0^{th}$ and $(-1)^{th}$-products is the set
\begin{eqnarray*}
{sr}_{0,-1}(M)&=&lsr_{0,-1}(M)\cap rsr_{0,-1}(M).
\end{eqnarray*}

\end{enumerate}
\end{dfn}
\begin{rmk} Notice that 
\begin{enumerate}
\item $sr_{0,-1}(M)\subseteq lsr_{0,-1}(M)\subseteq r_{0,-1}(M)$.
\item $sr_{0,-1}(M)\subseteq rsr_{0,-1}(M)\subseteq r_{0,-1}(M)$.
\end{enumerate}
\end{rmk}

\smallskip

\begin{ex}(\cite{DW}) Let $V=\mathbb{C}[t,t^{-1}]$ be a commutative vertex algebra. Let $M\subset V$ be a set of Laurent polynomials whose constant term equals 0. Then $r_{0,-1}(M)=sr_{0,-1}(M)=t\mathbb{C}[t]\cup t^{-1}\mathbb{C}[t^{-1}]$. 
\end{ex}

\smallskip

\begin{lem} Let $M$ be a subspace of $V$. If $\mathcal{D}(V)\subset M$. Then $u\in lsr_{0,-1}(M)$ if and only if $u\in rsr_{0,-1}(M)$. In particular, we have $lsr_{0,-1}(M)=rsr_{0,-1}(M)=sr_{0,-1}(M)$. 
\end{lem}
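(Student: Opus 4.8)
The plan is to reduce the statement to the two ``skew-symmetry modulo $\mathcal{D}(V)$'' identities contained in (\ref{u0v}) and (\ref{u-1v}). First I would record the following: for all $p,q\in V$ one has $p_0q+q_0p\in\mathcal{D}(V)$ and $p_{-1}q-q_{-1}p\in\mathcal{D}(V)$. Indeed, every summand occurring on the right-hand sides of (\ref{u0v}) and (\ref{u-1v}) has the form $\mathcal{D}^{i}(\,\cdot\,)$ with $i\geq 1$; these are finite sums by the truncation axiom $u_nv=0$ for $n\gg 0$, and $\mathcal{D}(V)$ is stable under $\mathcal{D}$ since $\mathcal{D}(\mathcal{D}^{\,i-1}w)=(\mathcal{D}^{\,i-1}w)_{-2}\mathbf{1}\in\mathcal{D}(V)$. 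Writing $\varepsilon_0=-1$ and $\varepsilon_{-1}=1$, this says $p_sq-\varepsilon_s\,q_sp\in\mathcal{D}(V)$ for $s\in\{0,-1\}$.

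Next I would transfer this to iterated products. Fix $u\in V$ and, for $t\geq 0$ and $n_1,\dots,n_t\in\{0,-1\}$, put $x:=u_{n_1}u_{n_2}\cdots u_{n_t}u\in V$. Applying the identity above with $p=b$ and $q=x$ gives, for every $b\in V$ and every $s\in\{0,-1\}$, that $b_sx-\varepsilon_s\,x_sb\in\mathcal{D}(V)\subseteq M$ by the hypothesis. Since $M$ is a $\mathbb{C}$-subspace, this yields
\[
b_s\bigl(u_{n_1}\cdots u_{n_t}u\bigr)\in M\quad\Longleftrightarrow\quad \bigl(u_{n_1}\cdots u_{n_t}u\bigr)_sb\in M,
\]
for each individual choice of $t$ and of $s,n_1,\dots,n_t\in\{0,-1\}$.

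Finally I would match the quantifiers. Assume $u\in lsr_{0,-1}(M)$ and let $w\in V$. The defining property of $lsr_{0,-1}(M)$ applied with $b=w$ yields $m\geq 0$ such that $w_su_{n_1}\cdots u_{n_t}u\in M$ for all $t\geq m$ and all $s,n_1,\dots,n_t\in\{0,-1\}$; by the displayed equivalence this is the same as $(u_{n_1}\cdots u_{n_t}u)_sw\in M$ for the same range, which (after renaming the outer index $s$ as $n$) is precisely the condition $u\in rsr_{0,-1}(M)$, realized with the same bound $m$. The reverse implication is symmetric, so $lsr_{0,-1}(M)=rsr_{0,-1}(M)$, whence $sr_{0,-1}(M)=lsr_{0,-1}(M)\cap rsr_{0,-1}(M)$ coincides with this common subspace. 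The argument is essentially formal; the only point requiring a little attention is that $\mathcal{D}(V)$ is $\mathcal{D}$-stable and that the correction sums in (\ref{u0v}) and (\ref{u-1v}) are finite, so that all higher-order terms genuinely lie in $M$. I do not anticipate a real obstacle beyond that.
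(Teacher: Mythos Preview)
Your proof is correct and follows exactly the approach the paper intends: the paper's own proof is the single line ``the above statements are consequences of equations (\ref{u0v}), (\ref{u-1v}),'' and you have carefully unpacked precisely that argument by observing that $b_sx-\varepsilon_s\,x_sb\in\mathcal{D}(V)\subseteq M$ and then matching quantifiers.
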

\begin{proof} The above statements are consequences of equations (\ref{u0v}), (\ref{u-1v}).
\end{proof}

\smallskip

\begin{dfn} Let $V$ be a vertex algebra with a $\mathbb{C}$-subspace $M$. We say that $M$ is a Mathieu-Zhao subspace of $V$ with respect to $0^{th}$ and $(-1)^{th}$-products ($MZ_{0,-1}$-subspace of $V$) if $r_{0,-1}(M)=sr_{0,-1}(M)$.
\end{dfn}

\smallskip

\begin{ex} Let $I$ be an ideal of $V$. Then $I$ is a $MZ_{0,-1}$-subspace of $V$.

\begin{proof} Let $u\in r_{0,-1}(I)$. Then there exists $m\geq 0$ such that $u_{n_1}...u_{n_t}u\in I$ for all $t\geq m$ and $n_1,...,n_t\in\{0,-1\}$. Since $I$ is an ideal of $V$, this implies that for $b,w\in V$, $b_su_{n_1}...u_{n_t}u\in I$ and $(u_{n_1}...u_{n_t}u)_nw\in I$ for all $t\geq m$ and $s,n, n_1,...,n_t\in\{0,-1\}$. Hence, $u\in sr_{0,-1}(M)$. Moreover, $I$ is a $MZ_{0,-1}$-subspace of $V$. 
\end{proof}
\end{ex}

\smallskip

\noindent Recall that for $n\geq 2$, $C_n(V)=Span_{\mathbb{C}}\{u_{-n}v~|~u,v\in V\}$. Also, if $p\geq q$ then $C_p(V)\subseteq C_q(V)$.
\begin{ex} For $n\geq 2$, $C_n(V)$ is a $MZ_{0,-1}$-subspace of $V$.

\begin{proof} Let $u\in r_{0,-1}(C_n(V))$. Then there exists $m\geq 0$ such that $u_{n_1}...u_{n_t}u\in C_n(V)$ for all $t\geq m$ and $n_1,...,n_t\in\{0,-1\}$. By (\ref{commutativity}), (\ref{associativity}), we can conclude that for $v\in V$, $v_s(u_{n_1}...u_{n_t}u), (u_{n_1}...u_{n_t}u)_sv\in C_n(V)$ for all $s\in \{0,-1\}$. Hence, $u\in sr_{0,-1}(C_n(V))$. Moreover, $C_n(V)$ is a $MZ_{0,-1}$-subspace of $V$.
\end{proof}
\end{ex}

\begin{ex} Let $V$ be a vertex operator algebra of $CFT$-type. We set $V_+=\coprod_{n\geq 1}V_{(n)}$. Recall that $C_1(V)$ is a subspace of $V$ linearly spanned by elements of type $a_{-1}b$, $L(-1)w$ for $a,b\in V_+$, $w\in V$. Also, $\mathcal{D}(V)\subseteq C_2(V)\subseteq C_1(V)$. In this example, we will show that $C_1(V)$ is in fact a $MZ_{0,-1}$-subspace of $V$.

\begin{proof} Let $u\in r_{0,1}(C_1(V))$. Then there exists $m\geq 0$ such that $u_{n_1}...u_{n_t}u\in C_1(V)$ for all $t\geq m$ and $n_1,...,n_t\in\{0,-1\}$. Since $V$ is of $CFT$-type and $\mathcal{D}(V)\subseteq C_1(V)$, to show that $u\in sr_{0,-1}(C_1(V))$, we only need to show that for $v\in V_+$, 
\begin{equation}\label{C1MZ}v_s(u_{n_1}...u_{n_t}u)\in C_1(V)
\end{equation} for all $s\in \{0,-1\}$. However, (\ref{C1MZ}) follows immediately from (\ref{commutativity}), 
and (\ref{Drel}). We can conclude that $u\in sr_{0,-1}(C_1(V))$ and $C_1(V)$ is a $MZ_{0,-1}$-subspace of $V$.
\end{proof}
\end{ex}

\smallskip

\begin{rmk} We first recall the definition of Mathieu-Zhao subspace of associative algebra in \cite{Z5}. Let $R$ be a commutative unital ring, and let $\mathcal{A}$ be an associative unital $R$-algebra. Let $U$ be an $R$-submodule of $\mathcal{A}$. The radical of $U$, $r(U)$, and strong radical, $sr(U)$, are defined as follow:
\begin{eqnarray*}
&&r(U)=\{a\in A~|~\text{ there exists $m\geq 1$ such that }\underbrace{a\cdot...\cdot a}_{t\text{ terms }}\in U \text{ for all }t\geq m\},\\
&&sr(U)=\{a\in A~|~\text{ for $u,v\in A$, there exists $m\geq 1$ such that }b\underbrace{a\cdot...\cdot a}_{t\text{ terms }}c\in U\\
&&\hspace{3.5cm}\text{ for all }t\geq m\}.
\end{eqnarray*}
The subspace $U$ is a Mathieu-Zhao subspace of $\mathcal{A}$ when $r(U)=sr(U)$.

Next, we let $V$ be a commutative vertex algebra. Hence, $V$ is a commutative associative algebra. Let $M$ be a subspace of a commutative vertex algebra $V$. It is straight forward show that $r_{0,-1}(M)=r(M)$ and $sr_{0,-1}(M)=sr(M)$. Moreover $M$ is a $MZ_{0,-1}$-subspace of $V$ if and only if $M$ is a Mathieu-Zhao subspace of $V$ when we consider $V$ as a commutative associative algebra.
\end{rmk}

\smallskip

Next, we study properties of Mathieu-Zhao subspaces of vertex algebras.

\begin{prop} Let $V$ be a vertex algebra. Let $M$ be a subspace of $V$. 
\begin{enumerate}
\item If $r_{0,-1}(M)\subseteq r_{0,-1}(\{0\})$, then $M$ is a $MZ_{0,-1}$-subspace of $V$.
\item If $sr_{0,-1}(M)=V$ then $M$ is a $MZ_{0,-1}$-subspace of $V$.
\item If ${\bf 1}\in sr_{0,-1}(M)$ then $M$ is a $MZ_{0,-1}$-subspace of $V$.
\item If $U$ be is a subspace of $V$ such that $U\subseteq M$, then $r_{0,-1}(U)\subseteq r_{0,-1}(M)$.
\end{enumerate}
\end{prop}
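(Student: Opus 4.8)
The plan is to verify each of the four assertions directly from the definitions of the radicals and of an $MZ_{0,-1}$-subspace, using the general inclusions $sr_{0,-1}(M)\subseteq lsr_{0,-1}(M)\subseteq r_{0,-1}(M)$ and $sr_{0,-1}(M)\subseteq rsr_{0,-1}(M)\subseteq r_{0,-1}(M)$ already recorded in the remark after the definition of the radicals. Since being an $MZ_{0,-1}$-subspace means exactly $r_{0,-1}(M)=sr_{0,-1}(M)$, and one inclusion is automatic, in each part it suffices to produce the reverse inclusion $r_{0,-1}(M)\subseteq sr_{0,-1}(M)$.

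For part (1), I would first observe that $r_{0,-1}(\{0\})$ is itself special: if $v_{n_1}\cdots v_{n_t}v=0$ for all large $t$, then appending one more factor $b_s$ on the left or taking $(\,\cdot\,)_n w$ on the right still gives $0\in M$, so $r_{0,-1}(\{0\})\subseteq sr_{0,-1}(\{0\})\subseteq sr_{0,-1}(M)$, the last inclusion because $\{0\}\subseteq M$ makes every relevant product lie in $M$ once it lies in $\{0\}$. Hence $r_{0,-1}(M)\subseteq r_{0,-1}(\{0\})\subseteq sr_{0,-1}(M)$, giving the claim. Part (2) is immediate: $sr_{0,-1}(M)=V\supseteq r_{0,-1}(M)$, so the two radicals coincide. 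For part (3), the key point is that ${\bf 1}_{-1}v=v$ and ${\bf 1}_0 v=0$ for all $v$ (from $Y({\bf 1},z)=1$), and more relevantly that products formed from ${\bf 1}$ behave trivially; I expect the intended argument is that if ${\bf 1}\in sr_{0,-1}(M)$ then in particular, taking $t$ large, the strong-radical condition forces $b_s(\,{\bf 1}_{n_1}\cdots {\bf 1}_{n_t}{\bf 1})\in M$ and $({\bf 1}_{n_1}\cdots{\bf 1}_{n_t}{\bf 1})_n w\in M$, and since ${\bf 1}_{n_1}\cdots{\bf 1}_{n_t}{\bf 1}$ is either ${\bf 1}$ or $0$, one deduces $b_s{\bf 1}=b_{s}\in M$ up to the $0$ case and $w\in M$ for all $w$, forcing $M=V$, whence $M$ is trivially $MZ_{0,-1}$. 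I should double-check this reduction carefully, as it is the one place where a small subtlety (the $0$ versus ${\bf 1}$ dichotomy, and the role of $s\in\{0,-1\}$) could bite. Part (4) is a purely formal monotonicity statement: if $v_{n_1}\cdots v_{n_t}v\in U$ for all $t\ge m$, then a fortiori these lie in $M\supseteq U$, so $v\in r_{0,-1}(M)$.

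The only step I anticipate as a genuine obstacle is part (3): one must be sure that the hypothesis ${\bf 1}\in sr_{0,-1}(M)$ really does collapse $M$ to all of $V$ (or otherwise force the radicals to agree), and the cleanest route is probably to note that for the constant sequence $n_1=\cdots=n_t=-1$ one has ${\bf 1}_{-1}\cdots{\bf 1}_{-1}{\bf 1}={\bf 1}$, so the left-strong condition with $b$ arbitrary and $s=-1$ yields $b_{-1}{\bf 1}=b\in M$ for all $b\in V$, i.e.\ $M=V$; then (2) applies. The other three parts are short and essentially bookkeeping with the definitions and the inclusions from the preceding remark, together with the observations $\mathcal{D}(V)\subseteq C_2(V)$ and the commutator/iterate formulas only if one wants to be maximally explicit, though I do not think they are needed here.
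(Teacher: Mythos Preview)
Your proposal is correct and is precisely a fleshed-out version of the paper's one-line proof, which reads in its entirety: ``The above statements follow immediately from the definition of $MZ_{0,-1}$-subspace of $V$.'' In particular, your handling of part~(3)---using ${\bf 1}_{-1}\cdots{\bf 1}_{-1}{\bf 1}={\bf 1}$ and $b_{-1}{\bf 1}=b$ to force $M=V$, then invoking~(2)---is exactly the intended unpacking, and your remarks about $\mathcal{D}(V)\subseteq C_2(V)$ and the commutator/iterate formulas being unnecessary here are accurate.
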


\begin{proof} The above statements follow immediately from the definition of $MZ_{0,-1}$-subspace of $V$.
\end{proof}

\smallskip

\begin{thm} Let $V$ be a vertex algebra. Let $M$ is a subspace of $V$ such that ${\bf 1}\in M$. Then $M$ is a $MZ_{0,-1}$-subspace of $V$ if and only if $M=V$.
\end{thm}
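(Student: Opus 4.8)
The plan is to prove the contrapositive/equivalence directly: one direction is trivial, and the other is the content of the theorem. First, if $M = V$, then clearly $r_{0,-1}(M) = V = sr_{0,-1}(M)$, so $M$ is an $MZ_{0,-1}$-subspace; this needs no work. So the real task is: assuming $\mathbf{1} \in M$ and $M$ is an $MZ_{0,-1}$-subspace, show $M = V$. The key observation is that $\mathbf{1} \in M$ forces $\mathbf{1} \in r_{0,-1}(M)$, because the iterated $0$th and $(-1)$th self-products of $\mathbf{1}$ are easy to compute: since $Y(\mathbf{1},z) = 1$, we have $\mathbf{1}_{-1}\mathbf{1} = \mathbf{1}$ and $\mathbf{1}_0 \mathbf{1} = 0$, so any word $\mathbf{1}_{n_1}\cdots \mathbf{1}_{n_t}\mathbf{1}$ with $n_i \in \{0,-1\}$ equals either $\mathbf{1}$ (if all $n_i = -1$) or $0$ (if some $n_i = 0$), and both lie in $M$. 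Hence $\mathbf{1} \in r_{0,-1}(M)$.

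Now invoke the hypothesis that $M$ is an $MZ_{0,-1}$-subspace, i.e.\ $r_{0,-1}(M) = sr_{0,-1}(M)$. This gives $\mathbf{1} \in sr_{0,-1}(M) = lsr_{0,-1}(M) \cap rsr_{0,-1}(M)$. In particular $\mathbf{1} \in lsr_{0,-1}(M)$: for every $b \in V$, there is $m \geq 0$ with $b_s\, \mathbf{1}_{n_1}\cdots \mathbf{1}_{n_t}\mathbf{1} \in M$ for all $t \geq m$ and all $s, n_1,\dots,n_t \in \{0,-1\}$. Take $t$ large with all $n_i = -1$, so the inner word is $\mathbf{1}$, and choose $s = -1$: then $b_{-1}\mathbf{1} \in M$ for every $b \in V$. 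But $b_{-1}\mathbf{1} = b$ by the vacuum axiom $\lim_{z\to 0} Y(b,z)\mathbf{1} = b$ together with $Y(b,z)\mathbf{1} \in V[[z]]$ (the $z^0$-coefficient of $Y(b,z)\mathbf{1}$ is $b_{-1}\mathbf{1}$). Therefore $b \in M$ for all $b \in V$, i.e.\ $M = V$.

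I do not expect a serious obstacle here: the argument is essentially bookkeeping with the vacuum axioms, the only subtlety being to make sure the radical membership is applied correctly (choosing the right word length $t$, the right exponents $n_i$, and the right slot $s$). One could alternatively use the $rsr$ component and the identity $\mathbf{1}_{-1}w = w$, or even just use $r_{0,-1}(M) \subseteq r_{0,-1}(\{0\})$ style reasoning, but the $lsr$ route above is the cleanest. It is worth remarking that this shows the $MZ_{0,-1}$ condition is genuinely a proper-subspace phenomenon, exactly paralleling the classical fact that a Mathieu-Zhao subspace of a commutative algebra containing the identity must be the whole algebra.
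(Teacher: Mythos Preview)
Your proof is correct and follows essentially the same approach as the paper's own proof. The paper's argument is the terse two-line version of what you wrote: it simply notes that $\mathbf{1}\in r_{0,-1}(M)=sr_{0,-1}(M)$ forces $b\in M$ for all $b\in V$; you have correctly unpacked why $\mathbf{1}\in r_{0,-1}(M)$ (via $\mathbf{1}_{-1}=\mathrm{id}$, $\mathbf{1}_0=0$) and how to extract $b=b_{-1}\mathbf{1}\in M$ from the $lsr$ condition.
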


\begin{proof} ($\Leftarrow$) Clear. 

($\Rightarrow$) Assume that $M$ is a $MZ_{0,-1}$-subspace of $V$. Since ${\bf 1}\in r_{0,-1}(M)=sr_{0,-1}(M)$, it implies that $b\in M$ for all $b\in V$. Hence, $V=M$.
\end{proof}

\smallskip

\noindent Recall that $f$ is a homomorphism from a vertex algebra $V^1$ to a vertex algebra $V^2$ if $f$ is a linear map such that $f(u_nv)=f(u)_nf(v)$ for $u,v\in V$, $n\in\mathbb{Z}$ and such that $f({\bf 1})={\bf 1}$. 
\begin{lem}\label{homMZ} Let $V^1$ and $V^2$ be vertex algebras and let $f:V^1\rightarrow V^2$ be a homomorphism. If $M$ is a $MZ_{0,-1}$-subspace of $V^2$, then $f^{-1}(M)$ is a $MZ_{0,-1}$-subspace of $V^1$.  
\end{lem}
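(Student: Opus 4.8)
The plan is to verify directly that $f^{-1}(M)$ satisfies $r_{0,-1}\big(f^{-1}(M)\big) = sr_{0,-1}\big(f^{-1}(M)\big)$, using that $f$ is a vertex algebra homomorphism and hence intertwines all the products $u_n v$. The inclusion $sr_{0,-1}\big(f^{-1}(M)\big) \subseteq r_{0,-1}\big(f^{-1}(M)\big)$ is automatic from the Remark following the definition of the radicals, so the whole content is the reverse inclusion $r_{0,-1}\big(f^{-1}(M)\big) \subseteq sr_{0,-1}\big(f^{-1}(M)\big)$.

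First I would take $u \in r_{0,-1}\big(f^{-1}(M)\big)$, so there is an $m \geq 0$ with $u_{n_1} u_{n_2} \cdots u_{n_t} u \in f^{-1}(M)$ for all $t \geq m$ and all $n_1,\dots,n_t \in \{0,-1\}$. Applying $f$ and using the homomorphism property repeatedly, $f(u)_{n_1} f(u)_{n_2} \cdots f(u)_{n_t} f(u) = f(u_{n_1} \cdots u_{n_t} u) \in M$ for all such $t$ and $n_i$; hence $f(u) \in r_{0,-1}(M)$. Since $M$ is a $MZ_{0,-1}$-subspace of $V^2$, we get $f(u) \in sr_{0,-1}(M) = lsr_{0,-1}(M) \cap rsr_{0,-1}(M)$. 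Now I would chase this through both radical conditions. For the left-strong part: given $b \in V^1$, there is $m' \geq 0$ with $f(b)_s\, f(u)_{n_1} \cdots f(u)_{n_t} f(u) \in M$ for all $t \geq m'$ and all $s,n_i \in \{0,-1\}$; but the left side equals $f\big(b_s\, u_{n_1} \cdots u_{n_t} u\big)$, so $b_s\, u_{n_1} \cdots u_{n_t} u \in f^{-1}(M)$, giving $u \in lsr_{0,-1}\big(f^{-1}(M)\big)$. For the right-strong part: given $w \in V^1$, there is $m'' \geq 0$ with $\big(f(u)_{n_1} \cdots f(u)_{n_t} f(u)\big)_n f(w) \in M$ for all $t \geq m''$ and all $n,n_i \in \{0,-1\}$; and this equals $f\big((u_{n_1} \cdots u_{n_t} u)_n w\big)$, so $(u_{n_1} \cdots u_{n_t} u)_n w \in f^{-1}(M)$, giving $u \in rsr_{0,-1}\big(f^{-1}(M)\big)$. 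Therefore $u \in sr_{0,-1}\big(f^{-1}(M)\big)$, completing the reverse inclusion and the proof.

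I do not expect any real obstacle here: the argument is purely formal bookkeeping with the identity $f(u_n v) = f(u)_n f(v)$, and the only point requiring a little care is to keep track that the bound $m$ produced by membership in $sr_{0,-1}(M)$ depends on the chosen $b$ (resp. $w$) in $V^1$ only through its image $f(b)$ (resp. $f(w)$) in $V^2$, which is exactly what the definition of the strong radical allows. One should also note at the outset that $f({\bf 1}) = {\bf 1}$ is not needed for this lemma, only the compatibility with the products, though it is harmless to keep it in the hypotheses.
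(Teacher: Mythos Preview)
Your proof is correct and follows essentially the same route as the paper's: take $u\in r_{0,-1}(f^{-1}(M))$, push through $f$ to get $f(u)\in r_{0,-1}(M)=sr_{0,-1}(M)$, then pull the strong-radical conditions back via $f(u_n v)=f(u)_n f(v)$. The only cosmetic difference is that the paper handles the left- and right-strong conditions in one line with a single test element $b$ and a single bound $q$, while you separate them with $b$ and $w$ and bounds $m',m''$; the content is identical.
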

\begin{proof} Let $v\in r_{0,-1}(f^{-1}(M))$. Hence, there exists $m\geq 0$ such that $$v_{n_1}v_{n_2}...v_{n_t}v\in f^{-1}(M)$$ for all $t\geq m,~n_1,..., n_t\in\{0,-1\}$. Using the fact that $f$ is a homomorphism, we can conclude that $$f(v)_{n_1}f(v)_{n_2}...f(v)_{n_t}f(v)\in M$$ for all $t\geq m,~n_1,..., n_t\in\{0,-1\}$. Hence, $f(v)\in r_{0,-1}(M)$. Let $b\in V^1$. Because $M$ is a $MZ_{0,-1}$-subspace of $V^2$, there exists $q\geq 0$ such that $$f(b)_sf(v)_{n_1}f(v)_{n_2}...f(v)_{n_t}f(v)\in M$$ and $(f(v)_{n_1}f(v)_{n_2}...f(v)_{n_t}f(v))_sf(b)\in M$ for all $t\geq q,~s, n_1,..., n_t\in\{0,-1\}$. Therefore, $v\in sr_{0,-1}(f^{-1}(M))$, and $f^{-1}(M)$ is a $MZ_{0,-1}$-subspace of $V^1$ as desired.  
\end{proof}

\smallskip

\begin{cor} Let $V^1$ and $V^2$ be vertex algebras such that $V^1\subseteq V^2$. If $M$ is a $MZ_{0,-1}$-subspace of $V^2$ then $M\cap V^1$ is a $MZ_{0,-1}$-subspace of $V^1$.
\end{cor}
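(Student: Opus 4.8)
The plan is to derive the statement as an immediate consequence of Lemma~\ref{homMZ} by exhibiting a suitable vertex algebra homomorphism whose preimage of $M$ is $M \cap V^1$. First I would observe that since $V^1 \subseteq V^2$ is a vertex subalgebra, the inclusion map $\iota : V^1 \hookrightarrow V^2$ is a homomorphism of vertex algebras: it is linear, it satisfies $\iota(u_n v) = u_n v = \iota(u)_n \iota(v)$ for all $u, v \in V^1$ and $n \in \ZZ$ (the products in $V^1$ being the restrictions of those in $V^2$), and it sends the vacuum of $V^1$ to the vacuum of $V^2$ (these coincide, as $V^1$ is a subalgebra). Then $\iota^{-1}(M) = \{ v \in V^1 \mid \iota(v) \in M \} = M \cap V^1$.

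Second, I would apply Lemma~\ref{homMZ} with $f = \iota$, $V^1$ and $V^2$ as given: since $M$ is assumed to be an $MZ_{0,-1}$-subspace of $V^2$, the lemma yields that $\iota^{-1}(M) = M \cap V^1$ is an $MZ_{0,-1}$-subspace of $V^1$, which is exactly the claim. No further work is needed.

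The only point requiring a moment's care — and what I expect to be the sole (very mild) obstacle — is confirming that ``$V^1 \subseteq V^2$'' is meant in the sense of vertex subalgebras, so that the inclusion really is a vertex algebra homomorphism in the sense defined just before Lemma~\ref{homMZ} (in particular, that the vacuum vectors agree and the state--field correspondences are compatible). This is the standard reading of the containment, so the proof is genuinely a one-line invocation of the preceding lemma; there is no computational content to grind through.

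\begin{proof}
Since $V^1 \subseteq V^2$ is a vertex subalgebra, the inclusion $\iota : V^1 \to V^2$ is a homomorphism of vertex algebras: it is linear, $\iota(u_n v) = \iota(u)_n \iota(v)$ for all $u, v \in V^1$ and $n \in \ZZ$, and $\iota({\bf 1}) = {\bf 1}$. As $\iota^{-1}(M) = M \cap V^1$, Lemma~\ref{homMZ} applied to $\iota$ shows that $M \cap V^1$ is a $MZ_{0,-1}$-subspace of $V^1$.
\end{proof}
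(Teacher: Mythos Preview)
Your proof is correct and follows exactly the same approach as the paper: apply Lemma~\ref{homMZ} to the inclusion $\iota: V^1 \to V^2$, noting that $\iota^{-1}(M) = M \cap V^1$. The paper's own proof is essentially the same one-liner.
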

\begin{proof} Let $\iota: V^1\rightarrow V^2$ be an inclusion map. Clearly, $\iota$ is a homomorphism from $V^1$ to $V^2$. By Lemma \ref{homMZ}, we can conclude immediately that $M\cap V^1=\iota^{-1}(M)$ is a $MZ_{0,-1}$-subspace of $V^1$.
\end{proof}

\smallskip

\begin{ex} Let $M$ be a $MZ_{0,-1}$-subspace of $V_L$. Then $M\cap M(1)$ is a $MZ_{0,-1}$-subspace of $M(1)$. 
\end{ex}

\smallskip

\begin{thm} Let $V$ be a vertex algebra and let $I$ be its ideal. Let $M$ be a subspace of $V$ such that $I\subseteq M$. Then 
$M$ is a $MZ_{0,-1}$-subspace of $V$ if and only if $M/I$ is a $MZ_{0,-1}$-subspace of $V/I$. 
\end{thm}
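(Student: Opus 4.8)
The plan is to prove this by passing back and forth through the quotient map $\pi : V \to V/I$, and the main point is that because $I$ is an ideal contained in $M$, the radical and strong radical ``commute'' with $\pi$ in the appropriate sense. More precisely, I would first observe that $\pi$ is a surjective homomorphism of vertex algebras (the grading is irrelevant here), and that $M = \pi^{-1}(M/I)$ precisely because $I \subseteq M$; this last identity is where the hypothesis $I \subseteq M$ is used and it is the structural reason the theorem holds.

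For the direction ($\Leftarrow$), suppose $M/I$ is an $MZ_{0,-1}$-subspace of $V/I$. Since $\pi$ is a homomorphism and $M = \pi^{-1}(M/I)$, Lemma \ref{homMZ} applies directly and gives that $M = \pi^{-1}(M/I)$ is an $MZ_{0,-1}$-subspace of $V$. So this direction is essentially free once the identity $M = \pi^{-1}(M/I)$ is noted.

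For the direction ($\Rightarrow$), suppose $M$ is an $MZ_{0,-1}$-subspace of $V$. Take $\bar v \in r_{0,-1}(M/I)$ and lift it to some $v \in V$ with $\pi(v) = \bar v$. The condition $\bar v_{n_1}\cdots \bar v_{n_t}\bar v \in M/I$ for all $t \ge m$, together with $\pi(v_{n_1}\cdots v_{n_t}v) = \bar v_{n_1}\cdots\bar v_{n_t}\bar v$, gives $v_{n_1}\cdots v_{n_t}v \in \pi^{-1}(M/I) = M$; hence $v \in r_{0,-1}(M) = sr_{0,-1}(M)$. Now for any $\bar b \in V/I$, choose a lift $b \in V$; applying the strong-radical condition for $v$ against $b$ and pushing forward through $\pi$ yields $\bar b_s \bar v_{n_1}\cdots\bar v_{n_t}\bar v \in M/I$ and $(\bar v_{n_1}\cdots\bar v_{n_t}\bar v)_s\bar b \in M/I$ for $t$ large, so $\bar v \in sr_{0,-1}(M/I)$. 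Since $sr_{0,-1}(M/I) \subseteq r_{0,-1}(M/I)$ always, equality holds and $M/I$ is an $MZ_{0,-1}$-subspace of $V/I$.

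The only step requiring any care is the verification that $M = \pi^{-1}(M/I)$: the inclusion $M \subseteq \pi^{-1}(M/I)$ is immediate, and for the reverse inclusion, if $\pi(w) \in M/I$ then $\pi(w) = \pi(w')$ for some $w' \in M$, so $w - w' \in I \subseteq M$, whence $w \in M$. This is where $I \subseteq M$ is essential, and it is the ``main obstacle'' only in the sense of being the one place the hypotheses are genuinely used — the rest is bookkeeping with the definitions of $r_{0,-1}$ and $sr_{0,-1}$ and the fact that $\pi$ intertwines all the products $x_n y$ and fixes the vacuum. I would present the forward direction in full and dispatch the reverse direction in one line via Lemma \ref{homMZ}.
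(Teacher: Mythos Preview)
Your proposal is correct and follows essentially the same approach as the paper: the $(\Leftarrow)$ direction is dispatched via Lemma \ref{homMZ} applied to the projection $\pi:V\to V/I$, and the $(\Rightarrow)$ direction is handled by lifting an element of $r_{0,-1}(M/I)$ to $V$, using $\pi^{-1}(M/I)=M$ to land in $r_{0,-1}(M)=sr_{0,-1}(M)$, and then pushing forward. Your explicit verification of $M=\pi^{-1}(M/I)$ and identification of where $I\subseteq M$ is used is a welcome clarification that the paper leaves implicit, but the arguments are otherwise the same.
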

\begin{proof} ($\Leftarrow$) Let $f:V\rightarrow V/I$ be a projection map. By Lemma \ref{homMZ}, we can conclude immediately that if $M/I$ is a $MZ_{0,-1}$-subspace of $V/I$, then $M$ is a $MZ_{0,-1}$-subspace of $V$.

($\Rightarrow$) Now, assume that $M$ is a $MZ_{0,-1}$-subspace of $V$. Let $v+I\in r_{0,-1}(M/I)$. Then there exists $m\geq 0$ such that $$v_{n_1}v_{n_2}...v_{n_t}v+I\in M/I$$ for all $t\geq m,~n_1,..., n_t\in\{0,-1\}$. Consequently, we have $$v_{n_1}v_{n_2}...v_{n_t}v\in M$$ for all $t\geq m,~n_1,..., n_t\in\{0,-1\}$ and $v\in r_{0,-1}(M)$. Let $b\in V$. Because $r_{0,-1}(M)=sr_{0,-1}(M)$, there exists $q\geq 0$ such that $$b_sv_{n_1}v_{n_2}...v_{n_t}v\in M\text{ and }(v_{n_1}v_{n_2}...v_{n_t}v)_sb\in M$$ for all $t\geq q,~s, n_1,..., n_t\in\{0,-1\}$. Hence, $$b_sv_{n_1}v_{n_2}...v_{n_t}v+I\in M/I\text{ and }(v_{n_1}v_{n_2}...v_{n_t}v)_sb+I\in M/I.$$ Moreover, $v+I\in sr_{0,-1}(M/I)$ and $M/I$ is a $MZ_{0,-1}$-subspace of $V/I$. 
\end{proof}

\smallskip

\begin{ex} Let $U$ be a subspace of $M(l,0)$ such that $I(l,0)\subseteq U$. Then $U$ is a $MZ_{0,-1}$-subspace of $M(l,0)$ if and only if $U/I$ is a $MZ_{0,-1}$-subspace of $L(l,0)$.
\end{ex}

\smallskip

The following theorem will play an important role for the rest of this paper.

\begin{thm}\label{rel} Let $M$ be a subspace of $V$ such that $M\supseteq C_2(V)$. Let $a\in V$. We have the following:
\begin{enumerate}
\item $a\in r_{0,-1}(M)$ if and only if $a+C_2(V)\in r(M/C_2(V))$. 
\item $a\in sr_{0,-1}(M)$ if and only if $a+C_2(V)\in sr(M/C_2(V))$. 
\end{enumerate}
Hence, $M$ is a $MZ_{0,-1}$-subspace of $V$ if and only if $M/C_2(V)$ is a Mathieu-Zhao subspace of $V/C_2(V)$.
\end{thm}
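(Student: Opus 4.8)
The proof will rest on four ingredients already in hand. First, by Proposition \ref{c2v}, $V/C_2(V)$ is a commutative associative algebra under $\overline{a}\cdot\overline{b}=\overline{a_{-1}b}$, carrying the Poisson bracket $\{\overline{a},\overline{b}\}=\overline{a_0b}$. Second, by Corollary \ref{a0-1a}, any monomial $a_{n_1}a_{n_2}\cdots a_{n_t}a$ with $n_i\in\{0,-1\}$ and at least one $n_i=0$ lies in $C_2(V)$, while $\overline{a_{-1}\cdots a_{-1}a}=\overline{a}^{\,t+1}$ when all $t$ indices equal $-1$. Third, by Remark \ref{dv}(2), $C_2(V)$ absorbs the $0^{th}$ and $(-1)^{th}$ products. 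Fourth, since $\mathcal{D}(V)\subseteq C_2(V)\subseteq M$, the Lemma asserting $lsr_{0,-1}(M)=rsr_{0,-1}(M)$ applies, so throughout I may replace $sr_{0,-1}(M)$ by $lsr_{0,-1}(M)$. I will also use the elementary translation: because $C_2(V)\subseteq M$, for every $x\in V$ one has $\overline{x}\in M/C_2(V)$ if and only if $x\in M$.

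For (1): if $a\in r_{0,-1}(M)$, specializing all indices to $-1$ in the defining condition gives $a_{-1}\cdots a_{-1}a\in M$ for all large $t$, hence $\overline{a}^{\,t+1}\in M/C_2(V)$ for all large $t$, i.e. $\overline{a}\in r(M/C_2(V))$. Conversely, if $\overline{a}^{\,t}\in M/C_2(V)$ for all $t\ge m$, then $a_{-1}\cdots a_{-1}a\in M$ for all large $t$, while every monomial containing an index $0$ lies in $C_2(V)\subseteq M$ by Corollary \ref{a0-1a}; so $a_{n_1}\cdots a_{n_t}a\in M$ for all large $t$ and all $n_i\in\{0,-1\}$, i.e. $a\in r_{0,-1}(M)$. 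For the forward half of (2): if $a\in sr_{0,-1}(M)=lsr_{0,-1}(M)$, write an arbitrary element of $V/C_2(V)$ as $\overline{b}$ and apply the $lsr$-condition to $b$ with $s=-1$ and all $n_i=-1$; this yields $b_{-1}a_{-1}\cdots a_{-1}a\in M$ for all large $t$, i.e. $\overline{b}\cdot\overline{a}^{\,t+1}\in M/C_2(V)$ for all large $t$. Since $\overline{b}$ was arbitrary and $V/C_2(V)$ is commutative, this is exactly $\overline{a}\in sr(M/C_2(V))$.

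The substance is the converse half of (2). Assume $\overline{a}\in sr(M/C_2(V))$ and fix $b\in V$; I must find an $m$ with $b_s a_{n_1}\cdots a_{n_t}a\in M$ for all $t\ge m$ and all $s,n_i\in\{0,-1\}$. If some $n_i=0$, then $a_{n_1}\cdots a_{n_t}a\in C_2(V)$ by Corollary \ref{a0-1a}, hence $b_s a_{n_1}\cdots a_{n_t}a\in C_2(V)\subseteq M$ by Remark \ref{dv}(2), for every $t\ge 1$. If all $n_i=-1$ and $s=-1$, then $\overline{b_{-1}a_{-1}\cdots a_{-1}a}=\overline{b}\cdot\overline{a}^{\,t+1}\in M/C_2(V)$ for all large $t$ by the hypothesis with test element $\overline{b}$, so $b_{-1}a_{-1}\cdots a_{-1}a\in M$. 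The remaining case, all $n_i=-1$ with $s=0$, is the delicate one: here $\overline{b_0 a_{-1}\cdots a_{-1}a}=\{\overline{b},\overline{a}^{\,t+1}\}$ is a Poisson bracket, not a product, so the previous argument does not apply. I will invoke the Leibniz rule contained in Proposition \ref{c2v}, namely $\{\overline{b},\overline{a}^{\,t+1}\}=(t+1)\,\overline{a}^{\,t}\{\overline{b},\overline{a}\}=(t+1)\,\overline{a}^{\,t}\cdot\overline{b_0a}$, and then apply the hypothesis $\overline{a}\in sr(M/C_2(V))$ to the test element $\overline{b_0a}$ to get $\overline{a}^{\,t}\cdot\overline{b_0a}\in M/C_2(V)$ for all large $t$; since $M/C_2(V)$ is a $\mathbb{C}$-subspace, the scalar $t+1$ causes no trouble, so $b_0 a_{-1}\cdots a_{-1}a\in M$ for all large $t$. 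Letting $m$ be the largest of the finitely many thresholds arising from the finitely many relevant cases shows $a\in lsr_{0,-1}(M)=sr_{0,-1}(M)$.

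The final displayed assertion is then immediate: by (1) and (2), the equality $r_{0,-1}(M)=sr_{0,-1}(M)$ holds in $V$ if and only if $r(M/C_2(V))=sr(M/C_2(V))$ holds in $V/C_2(V)$, which is the definition of $M/C_2(V)$ being a Mathieu--Zhao subspace. I expect the one genuine obstacle to be the $s=0$, all-$(-1)$ subcase just discussed: it is precisely the place where the Poisson structure of $V/C_2(V)$ and the full force of the \emph{strong} radical hypothesis (rather than the ordinary radical) are both indispensable; everything else is bookkeeping organized around Corollary \ref{a0-1a}.
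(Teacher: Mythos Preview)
Your proof is correct and follows essentially the same route as the paper's. Both arguments handle (1) via Corollary \ref{a0-1a}, and for the converse in (2) both isolate the single nontrivial case $s=0$ with all $n_i=-1$ and dispatch it by the Leibniz identity $\{\overline{b},\overline{a}^{\,t+1}\}=(t+1)\,\overline{a}^{\,t}\cdot\overline{b_0a}$, then invoke the strong-radical hypothesis with test element $\overline{b_0a}$; the paper performs this same computation directly in $V$ using the commutator formula, while you phrase it in the Poisson algebra, but the content is identical. Your use of the Lemma $lsr_{0,-1}(M)=rsr_{0,-1}(M)$ (applicable since $\mathcal{D}(V)\subseteq C_2(V)\subseteq M$) to reduce to the left condition is a minor organizational improvement over the paper's $b_{-1}w$ device for the forward direction of (2), but again the substance is the same.
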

\begin{proof} First, we will prove statement (1). Clearly, if $a\in r_{0,-1}(M)$ then $a+C_2(V)\in r(M/C_2(V))$. Now, we assume that $a+C_2(V)\in r(M/C_2(V))$. Hence, there exists $m\geq 0$ such that $\underbrace{a_{-1}....a_{-1}}_{t\text{ terms}}a+C_2(V)\in M/C_2(V)$ for all $t\geq m$. In particular, $\underbrace{a_{-1}....a_{-1}}_{t\text{ terms}}a\in M$. By Corollary \ref{a0-1a}, we can conclude immediately that $a\in r_{0,-1}(M)$.

Next, we will prove the statement (2). Clearly, if $u\in sr_{0,-1}(M)$ then for $b, w\in V$, there exists $p\geq 0$ such that $(b_{-1}w)_{-1}\underbrace{u_{-1}....u_{-1}}_{q\text{ terms}}u+C_2(V)\in M/C_2(V)$ for all $q\geq p$. This implies that 
$$b_{-1}\underbrace{u_{-1}...u_{-1}u_{-1}}_{q+1\text{ terms}}w\equiv b_{-1}(\underbrace{u_{-1}....u_{-1}}_{q\text{ terms}}u)_{-1}w+C_2(V)\in M/C_2(V)$$ for all $q\geq p$ and $u+C_2(V)\in sr(M/C_2(V))$. 

Now we will show that if $v+C_2(V)\in sr(M/C_2(V))$ then $v\in sr_{0,-1}(M)$. Let $v+C_2(V)\in sr(M/C_2(V)).$ To show that $v\in sr_{0,-1}(M)$, we only need to show that for $b\in V$, there exists $m\geq 0$ such that $b_0\underbrace{v_{-1}....v_{-1}}_{t\text{ terms}}v\in M$ for all $t\geq m$. Notice that 
\begin{eqnarray*}
b_0\underbrace{v_{-1}....v_{-1}}_{t\text{ terms}}v
&&=v_{-1}b_0\underbrace{v_{-1}....v_{-1}}_{t-1\text{ terms}}v+(b_0v)_{-1}\underbrace{v_{-1}....v_{-1}}_{t-1\text{ terms}}v\\
&&\equiv v_{-1}b_0\underbrace{v_{-1}....v_{-1}}_{t-1\text{ terms}}v+(\underbrace{v_{-1}....v_{-1}}_{t-1\text{ terms}}v)_{-1}(b_0v)\mod C_2(V)\\
&&\equiv (t+1)(\underbrace{v_{-1}....v_{-1}}_{t-1\text{ terms}}v)_{-1}(b_0v)\mod C_2(V).\\
\end{eqnarray*} 
Since $v+C_2(V)\in sr(M/C_2(V)$, this implies that there exists $d\geq 0$ such that $$(\underbrace{v_{-1}....v_{-1}}_{f\text{ terms}}v)_{-1}(b_0v)+C_2(V)\in M/C_2(V)$$ for all $f\geq d$. Therefore, $b_0\underbrace{v_{-1}....v_{-1}}_{f\text{ terms}}v\in M$ for all $f\geq d$ and $v\in sr_{0,-1}(M)$.

By using the statements (1) and (2), we can conclude immediately that $M$ is a $MZ_{0,-1}$-subspace of $V$ if and only if $M/C_2(V)$ is a Mathieu-Zhao subspace of $V/C_2(V)$.

\end{proof}

\smallskip 

\begin{cor} Let $V$ be a vertex algebra and let $M$ be a subspace of $V$ that contains $C_2(V)$ such that $M/C_2(V)$ is an ideal of a commutative associative algebra $V/C_2(V)$. Then $M$ is a $MZ_{0,-1}$-subspace of $V$.
\end{cor}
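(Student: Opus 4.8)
The Corollary is an immediate consequence of Theorem~\ref{rel} combined with the elementary fact that every ideal of a commutative associative algebra is a Mathieu--Zhao subspace of that algebra. So I would first recall (or quickly verify) this elementary fact: if $\cI$ is an ideal of a commutative associative $\CC$-algebra $A$, then $r(\cI) = sr(\cI)$. Indeed, if $a \in r(\cI)$, so $a^t \in \cI$ for all $t \geq m$, then for any $b, c \in A$ we have $b\, a^t\, c \in \cI$ for all $t \geq m$ because $\cI$ is an ideal (it absorbs multiplication by arbitrary elements of $A$); hence $a \in sr(\cI)$, and since $sr(\cI) \subseteq r(\cI)$ always, we conclude $r(\cI) = sr(\cI)$, i.e. $\cI$ is a Mathieu--Zhao subspace of $A$.

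\textbf{Applying Theorem~\ref{rel}.} By hypothesis $M \supseteq C_2(V)$ and $M/C_2(V)$ is an ideal of the commutative associative algebra $V/C_2(V)$ (this last structure being the one furnished by Proposition~\ref{c2v}). By the elementary fact just recalled, $M/C_2(V)$ is then a Mathieu--Zhao subspace of $V/C_2(V)$. Theorem~\ref{rel} says precisely that $M$ is a $MZ_{0,-1}$-subspace of $V$ if and only if $M/C_2(V)$ is a Mathieu--Zhao subspace of $V/C_2(V)$; the right-hand side now holds, so $M$ is a $MZ_{0,-1}$-subspace of $V$, as claimed.

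\textbf{Where the work is.} There is essentially no obstacle here: all the substantive content — the comparison of the radical and strong radical upstairs in $V$ with those downstairs in $V/C_2(V)$, handling the interplay of the $0^{\text{th}}$ and $(-1)^{\text{th}}$ products, and the reductions via Corollary~\ref{a0-1a} — has already been carried out in the proof of Theorem~\ref{rel}. The only thing this Corollary adds is the trivial observation that ideals are Mathieu--Zhao subspaces in the purely associative setting, which I would state in one line. If one wanted to avoid even invoking that observation, one could instead cite the remark preceding the section (or Zhao's original work \cite{Z1, Z5}) that ideals of associative algebras are Mathieu--Zhao subspaces, but the one-line argument above is self-contained and seems cleanest.
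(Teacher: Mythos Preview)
Your argument is correct and matches the paper's approach exactly: the paper states this Corollary without proof immediately after Theorem~\ref{rel}, treating it as an immediate consequence of that theorem together with the elementary fact that every ideal of a commutative associative algebra is a Mathieu--Zhao subspace. Your write-up supplies precisely that one-line justification and correctly identifies Theorem~\ref{rel} as doing all the real work.
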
 
\smallskip

\begin{ex} Let $\mathfrak{h}=\mathbb{C}\alpha$ and let $\lambda\in\mathbb{C}$. We set $$M=Span_{\mathbb{C}}\{u_{-1}(\alpha(-1){\bf 1}-\lambda{\bf 1})+v~|~u\in M(1),v\in C_2(M(1))\}.$$ Recall that a basis of $M(1)$ is $\{\alpha(-n_1).....\alpha(-n_k){\bf 1}~|~n_1\geq n_2\geq...\geq n_k\geq 1\}$, and $M(1)/C_2(M(1))$ is isomorphic to $\mathbb{C}[x]$ as commutative associative algebras. If $n_1\geq 2$, by \ref{associativity}, we have $(\alpha(-n_1).....\alpha(-n_k){\bf 1})_{-1}(\alpha(-1){\bf 1}-\lambda{\bf 1})\in C_2(V)$. Hence, $$M=Span_{\mathbb{C}}\{~(\underbrace{\alpha(-1)....\alpha(-1){\bf 1}}_{k\text{ terms}})_{-1}(\alpha(-1){\bf 1}-\lambda{\bf 1})+v~|~ k\geq 0, v\in C_2(V)\}$$ and $M/C_2(V)$ is isomorphic to an ideal generated by $x-\lambda$. Therefore, $M$ is a $MZ_{0,-1}$-subspace of $M(1)$. 
\end{ex}

\clearpage

\section{Mathieu-Zhao subspaces of $CFT$-type Vertex Operator Algebras that satisfy the $C_2$-cofiniteness condition}

In this section, we will study Mathieu-Zhao subspaces of vertex operator algebras $V$ of $CFT$-type that satisfy the $C_2$-cofiniteness condition. In particular, we will show that if $M$ is a subspace of $V$ such that $M\supseteq C_2(V)$ and ${\bf 1}\not\in M$, then $M$ is a $MZ_{0,-1}$-subspace of $V$. 

Also, at the end of this section we will discuss a relationship between  the Jacobian conjectures and Mathieu-Zhao subspaces of Heisenberg vertex operator algebras.

\smallskip

\begin{thm}\label{src2} Let $V$ be a vertex operator algebra. Let $M$ be a subspace of $V$. 

\begin{enumerate}
\item If $V=\oplus_{n=n_0}^{\infty}V_n$ and $n_0<0$, then $sr_{0,-1}(M)\neq \{0\}$. In fact, if $v\in \oplus_{n=n_0}^{-1}V_n$, then $v\in sr_{0,-1}(M)$.
\item Assume that $V$ satisfies the $C_2$-condition and $M\supseteq C_2(V)$. If $u\in \oplus_{i=1}^kV_k$ for some positive integer $k$, then $u\in sr_{0,-1}(M)$. 
\end{enumerate}
\end{thm}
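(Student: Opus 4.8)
The plan is to reduce both claims to nilpotency statements in the finite-dimensional (for part (2)) or graded (for part (1)) commutative Poisson algebra $V/C_2(V)$, using Theorem~\ref{rel} and the grading conventions. For part (1), I would first observe that if $v\in\bigoplus_{n=n_0}^{-1}V_n$, then each factor in any product $v_{n_1}v_{n_2}\cdots v_{n_t}v$ (with $n_i\in\{0,-1\}$) lowers the weight: using the grading axiom $u_m V_{(n)}\subseteq V_{(k+n-m-1)}$ with $u\in V_{(k)}$, the product $v_0 w$ of a weight-$k$ element $v$ with $k<0$ with a weight-$\ell$ element $w$ lands in weight $k+\ell-1<\ell$, and $v_{-1}w$ lands in weight $k+\ell<\ell$. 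So iterating the $0$-th and $(-1)$-th products against $v$ strictly decreases weight by at least $1$ each time (since $k\le -1$ forces $k-1\le -2$ and $k\le -1$), hence after finitely many steps the weight drops below $n_0$ and the product is forced to be $0\in M$. The same argument shows that prepending any $b_s$ (for $s\in\{0,-1\}$, $b\in V$) to a sufficiently long such product still gives $0$, and postcomposing $(v_{n_1}\cdots v_{n_t}v)_n w$ likewise: once $v_{n_1}\cdots v_{n_t}v=0$ the whole expression vanishes. Therefore $v\in sr_{0,-1}(M)$ and in particular $sr_{0,-1}(M)\ne\{0\}$.

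For part (2), by Theorem~\ref{rel} it suffices to show that the image $\bar u$ of $u$ in the commutative associative algebra $A:=V/C_2(V)$ lies in $sr(A\text{-submodule }M/C_2(V))$; in fact it is enough to show $\bar u$ is nilpotent in $A$, since then $\bar u^t=0\in M/C_2(V)$ for $t$ large and likewise $b\,\bar u^{\,t}\,c=0$ for all $b,c\in A$, giving $\bar u\in sr(M/C_2(V))$ regardless of what $M/C_2(V)$ is. So the crux is: if $u\in\bigoplus_{i=1}^{k}V_{(i)}$ and $V$ is $C_2$-cofinite of CFT-type, then $\bar u$ is nilpotent in $A$. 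Here I would use that $A=V/C_2(V)$ inherits a grading $A=\bigoplus_{n\ge 0}A_{(n)}$ with $A_{(0)}=\mathbb{C}\bar{\mathbf 1}$ (CFT-type), that the product $\bar a\cdot\bar b$ coming from the $(-1)$-th product respects this grading (weights add, since $a_{-1}b$ has weight $\mathrm{wt}\,a+\mathrm{wt}\,b$), and that $\dim A<\infty$ (the $C_2$-condition). Thus $A$ is a finite-dimensional commutative graded algebra with $A_{(0)}=\mathbb{C}$, so its augmentation ideal $A_+=\bigoplus_{n\ge 1}A_{(n)}$ is a finite-dimensional nilpotent ideal; since $u$ has no weight-$0$ component, $\bar u\in A_+$ and hence $\bar u$ is nilpotent.

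The main obstacle I anticipate is purely bookkeeping around the grading: confirming that the $(-1)$-th product descends to a \emph{graded} commutative associative product on $V/C_2(V)$ (one must check $C_2(V)$ is a graded subspace — it is, since $u_{-2}v$ is homogeneous when $u,v$ are — and that Proposition~\ref{c2v}'s associativity/commutativity relations are compatible with weights) and that CFT-type indeed forces $A_{(0)}=\mathbb{C}\bar{\mathbf 1}$ with $A_{(n)}=0$ for $n<0$, so that $A_+$ really is nilpotent of finite index. Once that is in place, both parts follow: part (1) by the weight-lowering/finiteness-of-grading-below-$n_0$ argument carried out directly at the vertex-algebra level, and part (2) by Theorem~\ref{rel} together with nilpotency of $\bar u$ in the finite-dimensional augmentation ideal. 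I would also remark, for part (2), that Corollary~\ref{a0-1a} already packages the reduction of arbitrary $\{0,-1\}$-products to the single product $\bar u^{\,t}$ modulo $C_2(V)$, which is exactly what lets the associative-algebra nilpotency conclusion be pulled back.
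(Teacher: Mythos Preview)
Your proposal is correct and mirrors the paper's proof: part (1) via the weight-lowering property of $v_0$ and $v_{-1}$ for negative-weight $v$, forcing the iterated products to vanish once the weight drops below $n_0$, and part (2) via nilpotency of $\bar u$ in the finite-dimensional graded algebra $V/C_2(V)$ (you route this through Theorem~\ref{rel}, while the paper argues directly that the iterated products land in $C_2(V)$ and then uses closure of $C_2(V)$ under $0$th and $(-1)$th products together with $\mathcal{D}(V)\subseteq C_2(V)$, but the content is the same). One minor point: the CFT-type hypothesis you invoke in part (2) is not in the statement and is unnecessary---nilpotency of $A_+=\bigoplus_{n\ge1}(V/C_2(V))_{(n)}$ already follows from $\dim V/C_2(V)<\infty$ and weight-additivity of the $(-1)$-product (only finitely many graded pieces are nonzero, and products of positive-weight elements have strictly increasing weight), so your worry about needing $A_{(0)}=\mathbb{C}\bar{\mathbf 1}$ and $A_{(n)}=0$ for $n<0$ is misplaced.
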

\begin{proof} To proof the first statement, we observe that if $v\in \oplus_{n=n_0}^{-1}V_n$, then there exists a nonnegative integer $m$ such that $v_{n_1}v_{n_2}...v_{n_t}v=0$ for all $n_i\in\{0,-1\}$, $m\geq t$. This implies that $b_s(v_{n_1}v_{n_2}...v_{n_t}v)=(v_{n_1}v_{n_2}...v_{n_t}v)_sb=0$ for all $b\in V$, $s\in\{0,-1\}$. Hence, $v\in sr_{0,-1}(M)$.

The second statement follows from the following facts. If $V$ satisfies the $C_2$-cofiniteness condition and $u\in \oplus_{i=1}^kV_k$ for some positive integer $k$, then there exists a nonnegative integer $m$ such that $u_{n_1}u_{n_2}...u_{n_t}u\in C_2(V)$ for all $n_i\in\{0,-1\}$, $t\geq m$. By using the facts that $\mathcal{D}(V)\subseteq C_2(V)\subseteq M$ and $C_2(V)$ is closed under $(0)^{th}$ and $(-1)^{th}$-products, we can conclude that $u\in sr_{0,-1}(M)$. 
\end{proof}

\smallskip

 Next, we recall useful information about Mathieu Zhao subspaces of associative algebras that we will need to use later on.
\begin{prop}\label{Zhao}\cite{Z5}
Let $K$ be an arbitrary field and $\mathcal{A}$ an associative algebra over $K$. For convenience, we denote by $\mathcal{G}(\mathcal{A})$ the collection of all $K$-subspaces $\mathcal{V}$ of $\mathcal{A}$ such that $r(\mathcal{V})$ is algebraic over $K$.
\begin{enumerate}
\item Let $\mathcal{V}$ be a $K$-subspace of $\mathcal{A}$. Then $\mathcal{V}\in\mathcal{G}(\mathcal{A})$ if one of the following four conditions holds:
\begin{enumerate}
\item $\mathcal{A}$ is algebraic over $K$;
\item $\mathcal{V}$ is algebraic over $K$;
\item $dim_K\mathcal{A}<\infty$;
\item $dim_K\mathcal{V}<\infty$. 
\end{enumerate}
\item Let $\mathcal{V}\in\mathcal{G}(\mathcal{A})$ such that $\mathcal{V}$ does not contain any nonzero idempotent. Then $\mathcal{V}$ is a Mathieu subspace of $\mathcal{A}$. 
\item Let $\mathcal{A}$ be a commutative $K$-algebra and $\mathcal{V}\in \mathcal{G}(\mathcal{A})$. Then $\mathcal{V}$ is a Mathieu subspace of $\mathcal{A}$ if and only if $r(\mathcal{V})$ is an ideal of $\mathcal{A}$. 
\end{enumerate}
\end{prop}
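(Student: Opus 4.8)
The plan is to treat the three parts in order, since (1) is essentially formal, (1) feeds into (2), and the idempotent machinery behind (2) also drives (3). For (1), one unwinds the definition of $r(\mathcal V)$ and uses elementary facts about algebraic elements: under (a) every element of $\mathcal A$, hence of $r(\mathcal V)$, is algebraic, and (c) reduces to (a) since $1,a,a^2,\dots$ are linearly dependent for each $a\in\mathcal A$; for (b) and (d), given $a\in r(\mathcal V)$ with $a^t\in\mathcal V$ for $t\geq m$, either $a^m$ is a root of a nonzero $p\in K[x]$ (so $a$ is a root of the nonzero polynomial $p(x^m)$) or $a^m,a^{m+1},\dots$ are linearly dependent over $K$ (giving a nonzero relation $a^m q(a)=0$), so in all cases $a$ is algebraic over $K$ and $\mathcal V\in\mathcal G(\mathcal A)$.

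For (2) the crucial reduction is to show that \emph{every $a\in r(\mathcal V)$ is nilpotent}. By (1) such an $a$ is algebraic, so $R:=K1+K[a]$ is finite dimensional; the descending chain $aR\supseteq a^2R\supseteq\cdots$ stabilizes at $a^nR=a^{n+1}R=\cdots$, and $R=a^nR\oplus\ker(a^n)$ is a direct product of rings on which $a$ acts invertibly, resp.\ nilpotently, the identity of the first factor being an idempotent $e\in a^nR$ with $a^t=ea^t$ for $t\geq n$. Enlarging $n$ so that also $n\geq m$ (where $a^t\in\mathcal V$ for $t\geq m$), the inclusion $a^nR=\operatorname{span}_K\{a^n,a^{n+1},\dots\}\subseteq\mathcal V$ shows $e\in\mathcal V$, so $e=0$ by hypothesis and hence $a^n\in a^nR=0$. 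Once this is established, any $a\in r(\mathcal V)$ satisfies $a^t=0$ for $t\gg 0$, so $ba^tc=0\in\mathcal V$ for $t\gg 0$ and all $b,c$; thus $r(\mathcal V)\subseteq sr(\mathcal V)$, and with the automatic reverse inclusion $r(\mathcal V)=sr(\mathcal V)$.

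For (3), the implication ``$\mathcal V$ Mathieu $\Rightarrow r(\mathcal V)$ an ideal'' I would prove by showing $r(\mathcal V)$ is a $K$-subspace closed under multiplication by $\mathcal A$, again via the idempotents of part (2). Given $a\in r(\mathcal V)=sr(\mathcal V)$ with attached idempotent $e$ (so $a^t=ea^t$ for $t\geq n$ and $e\in\mathcal V$, hence $e\in r(\mathcal V)=sr(\mathcal V)$ because $e^t=e$), membership of $e$ in $sr(\mathcal V)$ forces $eb^j\in\mathcal V$ for every $j$, so $(eb)^j\in\mathcal V$ for all $j$, $eb\in r(\mathcal V)$, and therefore $eb$ is algebraic over $K$ by (1), say of degree $D$; then $eb^t=(eb)^t=\sum_{j<D}\beta_j(t)\,eb^j$ for scalars $\beta_j(t)\in K$, and multiplying by $a^t$ gives $(ab)^t=a^tb^t=\sum_{j<D}\beta_j(t)\,a^tb^j$ for $t\geq n$; since $a\in sr(\mathcal V)$ each $a^tb^j\in\mathcal V$ for $t\gg 0$, whence $ab\in r(\mathcal V)$. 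Closure under addition is analogous: for $a,a'\in r(\mathcal V)$ with idempotents $e,e'$, in the expansion of $(a+a')^t$ every monomial $a^{t-k}(a')^k$ with $k$ large equals $e'a^{t-k}(a')^k\in\mathcal V$ and every one with $t-k$ large equals $ea^{t-k}(a')^k\in\mathcal V$, so $(a+a')^t\in\mathcal V$ for $t\gg 0$.

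For the converse, ``$r(\mathcal V)$ an ideal $\Rightarrow\mathcal V$ Mathieu'', the idempotent $e$ of a given $a\in r(\mathcal V)$ reduces matters to the ring $e\mathcal A$: since $a^tb=a^t(eb)$ for $t\geq n$, it suffices to get $a^t(eb)\in\mathcal V$ for $t\gg 0$, and on $e\mathcal A$ the element $ea$ is a \emph{unit}, so the ideal hypothesis forces $e\mathcal A\subseteq r(\mathcal V)$ and hence $e\mathcal A$ is algebraic over $K$. One then works inside the finite-dimensional commutative algebra generated by $ea$ and $eb$, where the powers of $ea$ obey a linear recurrence, and—using that $r(\mathcal V)$ is a $K$-subspace together with the membership conditions it supplies (for instance $(a\cdot e(b+\lambda))^t\in\mathcal V$ for $t\gg 0$, for every $\lambda\in K$)—extracts $a^t(eb)\in\mathcal V$ for $t\gg 0$ by a Vandermonde/interpolation argument. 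I expect this last extraction to be the main obstacle, since it requires care with how the eventual bounds grow as $\lambda$ varies, with the possible vanishing of binomial coefficients in positive characteristic, and with the case of a finite field $K$ (which must be handled separately); combining the two implications then yields the stated equivalence.
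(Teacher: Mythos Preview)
This proposition is not proved in the paper at all: it is quoted verbatim from Zhao's article \cite{Z5} and used as a black box, so there is no ``paper's own proof'' to compare against.  That said, let me comment on your attempt on its own merits.

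Your arguments for (1), for (2), and for the forward implication in (3) are correct.  In (2) the Fitting decomposition of the finite-dimensional commutative ring $K[a]$ indeed produces an idempotent $e\in a^nK[a]\subseteq\mathcal V$, and the hypothesis kills it, forcing $a$ to be nilpotent; the conclusion $r(\mathcal V)=sr(\mathcal V)$ then follows.  In the forward direction of (3) your use of $e\in sr(\mathcal V)$ (hence $e\mathcal A\subseteq\mathcal V$) together with the algebraicity of $eb$ to bound the number of ``$b$--terms'' is a clean way to get closure of $r(\mathcal V)$ under multiplication; the additive closure via the two idempotents $e,e'$ is also fine.

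The converse of (3) is where your sketch is genuinely incomplete, and you have correctly located the difficulty.  The missing observation that makes the interpolation idea work is a \emph{uniform} bound: if $B=K[a,eb]$ has dimension $D$, then for \emph{every} $x\in r(\mathcal V)\cap B$ the Fitting argument gives $x^DK[x]=e_xK[x]$, and since $x^{m_x},x^{m_x+1},\dots$ already span $e_xK[x]$ one gets $x^t\in\mathcal V$ for all $t\ge D$, independently of $x$.  With this uniform bound in hand your Vandermonde scheme (applied to $(a+\lambda\,eb)^t$ for $t\ge D$ and $t+1$ distinct scalars $\lambda$) does extract $a^{t-1}(eb)\in\mathcal V$ when $K$ is infinite and $\binom{t}{1}\ne 0$ in $K$.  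The residual issues you flag --- positive characteristic and finite $K$ --- are real and require a separate argument (in Zhao's treatment they are handled via the structure of finite-dimensional commutative $K$-algebras as products of local Artinian rings, analyzing the components where the primitive idempotent lies in $r(\mathcal V)$ versus those where it does not).  So your plan is sound, but as written it is not yet a proof over an arbitrary field.
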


\smallskip

\begin{thm} Let $V$ be a vertex algebra. Let $M$ be a subspace of $V$ such that $C_2(V)\subseteq M$. Assume that $dim M/C_2(V)<\infty$. Then $M$ is a $MZ_{0,-1}$-subspace of $V$ if and only if $r(M/C_2(V))$ is an ideal of $V/C_2(V)$.
\end{thm}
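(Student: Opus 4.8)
The plan is to reduce everything to the commutative-algebra statement in Proposition~\ref{Zhao}, using Theorem~\ref{rel} as the bridge. Write $\cA = V/C_2(V)$, a commutative associative algebra by Proposition~\ref{c2v}, and $\bar M = M/C_2(V)$, a $K$-subspace of $\cA$ with $K = \CC$. Since $\dim_{\CC} \bar M < \infty$ by hypothesis, condition (1)(d) of Proposition~\ref{Zhao} applies, so $\bar M \in \cG(\cA)$; in particular $r(\bar M)$ is algebraic over $\CC$, which is exactly the hypothesis needed for part (3) of that proposition.

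Now I would argue the equivalence by a short chain. By Theorem~\ref{rel}, $M$ is a $MZ_{0,-1}$-subspace of $V$ if and only if $\bar M$ is a Mathieu-Zhao subspace of the commutative associative algebra $\cA = V/C_2(V)$. Since $\bar M \in \cG(\cA)$, Proposition~\ref{Zhao}(3) says that $\bar M$ is a Mathieu subspace of $\cA$ if and only if $r(\bar M) = r(M/C_2(V))$ is an ideal of $\cA = V/C_2(V)$. Concatenating the two equivalences gives the statement: $M$ is a $MZ_{0,-1}$-subspace of $V$ if and only if $r(M/C_2(V))$ is an ideal of $V/C_2(V)$.

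The only point that needs a line of care is matching up the two notions of ``radical'': the $r(\bar M)$ appearing in Proposition~\ref{Zhao} is the associative-algebra radical taken with respect to the ring multiplication of $\cA$, which in $V/C_2(V)$ is the product induced by the $(-1)^{\mathrm{st}}$ product; Theorem~\ref{rel}(1) identifies this with the image of $r_{0,-1}(M)$ under the quotient map, so there is no ambiguity. I do not expect a genuine obstacle here — the theorem is essentially a corollary of Theorem~\ref{rel} together with Proposition~\ref{Zhao}(1)(d) and (3), and the whole proof is a two-step assembly. If anything, the mild subtlety is simply to invoke the finite-dimensionality hypothesis in the right place (to land in $\cG(\cA)$) before quoting part (3), since part (3) is stated only for $\cV \in \cG(\cA)$.
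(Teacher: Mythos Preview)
Your proposal is correct and follows exactly the paper's approach: the paper's own proof is a single line stating that the result ``follows immediately from Theorem~\ref{rel} and Proposition~\ref{Zhao},'' and you have simply unpacked that citation by using Proposition~\ref{Zhao}(1)(d) to place $\bar M$ in $\cG(\cA)$ and then chaining Theorem~\ref{rel} with Proposition~\ref{Zhao}(3). Your additional remark about matching the two notions of radical is a harmless clarification, not a deviation.
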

\begin{proof} This follows immediately from Theorem \ref{rel} and Propositin \ref{Zhao}.
\end{proof}

\smallskip

\noindent Now we study Mathieu-Zhao subspaces of $CFT$-type vertex operator algebras that satisfy the $C_2$-cofiniteness condition.
\begin{lem} Let $V$ be a vertex operator algebra of $CFT$-type that satisfies the $C_2$-cofiniteness condition. Then $V/C_2(V)$ has no nontrivial idempotent.
\end{lem}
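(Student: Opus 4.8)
The plan is to show that any idempotent $\bar e$ in $A := V/C_2(V)$ must be $0$, using the grading on $V$ inherited by $A$ together with the $CFT$-type hypothesis $V_{(0)} = \CC{\bf 1}$. The key point is that $A$ is a graded commutative associative algebra $A = \coprod_{n\geq 0} A_n$, where $A_n$ is the image of $V_{(n)}$, with $A_0 = \CC\bar{\bf 1}$ one-dimensional; and an idempotent in a nonnegatively graded algebra with one-dimensional degree-zero piece must live in degree zero.

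First I would record the grading on $A$: since $C_2(V) = \Span\{u_{-2}v\}$ is a graded subspace (if $u \in V_{(k)}$, $v \in V_{(n)}$ then $u_{-2}v \in V_{(k+n+1)}$), the quotient $A$ inherits a $\ZZ$-grading, supported in degrees $\geq 0$ because $V$ is of $CFT$-type, with $A_0 = \CC\bar{\bf 1}$. The product on $A$ is the one from Proposition \ref{c2v}, and it respects the grading. Next, given $\bar e \in A$ with $\bar e^2 = \bar e$, write $\bar e = \sum_{n\geq 0} \bar e_n$ with $\bar e_n \in A_n$ and let $d$ be the top degree with $\bar e_d \neq 0$ (if $\bar e \neq 0$). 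Comparing degree-$2d$ components of $\bar e^2 = \bar e$ forces $\bar e_d^2 = 0$ if $d > 0$ and $\bar e_d^2 = \bar e_d$ if $d = 0$; iterating downward (compare the lowest nonzero degree, etc.) one concludes $\bar e \in A_0 = \CC\bar{\bf 1}$, so $\bar e = \lambda\bar{\bf 1}$ with $\lambda^2 = \lambda$, i.e. $\lambda \in \{0,1\}$. Thus the only idempotents are $0$ and $\bar{\bf 1}$, and "no nontrivial idempotent" should be read as: the only idempotents are these two obvious ones.

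Here I expect the main subtlety to be bookkeeping rather than a deep obstacle: one has to be careful that the degree-$2d$ component of $\bar e^2$ is exactly $\bar e_d^2$ (it is, since all other products of homogeneous components landing in degree $2d$ would need a factor of degree $> d$, which vanishes), and one has to run the downward induction cleanly — after showing $\bar e_d^2 = 0$ for the top degree $d$, subtract nothing but instead observe that a nonzero nilpotent of square zero in $A$ is still allowed, so the argument must instead show directly that every homogeneous idempotent component vanishes. The cleanest route: since $A$ is nonnegatively graded with $A_0 = \CC\bar{\bf 1}$, the augmentation ideal $A_+ = \coprod_{n\geq 1} A_n$ is the unique maximal graded ideal, and by finite-dimensionality ($C_2$-cofiniteness) $A_+$ is nilpotent; then any idempotent $\bar e$ maps to an idempotent in $A/A_+ \cong \CC$, so $\bar e - \lambda\bar{\bf 1} \in A_+$ is an idempotent in a nilpotent-modulo... — more precisely $\bar e - \lambda \bar{\bf 1}$ is idempotent-like and nilpotent, hence $0$. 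I would phrase the final write-up via this nilpotent-augmentation-ideal argument, as it avoids the messy term-by-term comparison.

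Let me restate the plan compactly for the actual proof text: (i) note $C_2(V)$ is graded so $A = V/C_2(V) = \coprod_{n\geq0} A_n$ is a finite-dimensional nonnegatively graded commutative algebra with $A_0 = \CC\bar{\bf 1}$; (ii) the augmentation ideal $A_+ = \coprod_{n\geq 1}A_n$ is a nilpotent ideal of finite codimension with $A/A_+ \cong \CC$; (iii) an idempotent $\bar e$ projects to $\lambda \in \{0,1\}$ in $A/A_+$, and $\bar e - \lambda\bar{\bf 1} \in A_+$ satisfies $(\bar e - \lambda\bar{\bf 1})^2 = \bar e - \lambda \bar{\bf 1}$ up to... — actually $\bar e(1-\bar e) = 0$ so writing $\bar e = \lambda\bar{\bf 1} + n$ with $n\in A_+$ nilpotent, $\lambda^2\bar{\bf 1} + (2\lambda-1)n + n^2 = \lambda\bar{\bf 1} + n$ forces $(2\lambda - 1)n + n^2 = 0$ (using $\lambda^2 = \lambda$), and since $2\lambda - 1 = \pm 1$ is a unit and $n$ is nilpotent, $n(\, \pm 1 + n) = 0$ with $\pm1 + n$ a unit gives $n = 0$. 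Hence $\bar e = \lambda\bar{\bf 1} \in \{0, \bar{\bf 1}\}$, proving the lemma.
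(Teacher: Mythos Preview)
Your final argument is correct and follows essentially the same route as the paper: both use that $A = V/C_2(V)$ is nonnegatively graded with $A_0 = \CC\bar{\bf 1}$ and that every element of $A_+$ is nilpotent (via $C_2$-cofiniteness), then write an idempotent as $\lambda\bar{\bf 1} + n$ with $n \in A_+$ and conclude $n = 0$. Your ``unit plus nilpotent is a unit'' step is a slightly cleaner packaging of the paper's explicit iteration $v^k \equiv (-1)^{k-1}v$, but the substance is the same.
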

\begin{proof} Clearly if $u\in \oplus_{n=1}^{\infty}V_n$ then $u+C_2(V)$ is nilpotent in $V/C_2(V)$. Let $v\in \oplus_{n=1}^{\infty}V_n$. If $(1+v)_{-1}(1+v)\equiv 1+v\mod C_2(V)$ then $v_{-1}v\equiv -v\mod C_2(V)$. Moreover, we have $\underbrace{v_{-1}....v_{-1}v}_{n\text{ terms}}\equiv (-1)^{n-1}v\mod C_2(V)$ for all $n\geq 2$. Hence, $v\equiv0\mod C_2(V)$, and $V/C_2(V)$ has no nontrivial idempotent.
\end{proof}

\begin{thm} Let $V$ be a vertex algebra of $CFT$-type that satisfies the $C_2$-cofiniteness condition. Let $M$ be a subspace of $V$ such that ${\bf 1}\not\in M$ and $C_2(V)$ is a subset of $M$, then $M$ is a $MZ_{0,-1}$-subspace of $V$ and $r_{0,-1}(M)=\oplus_{n=1}^{\infty}V_n$. \end{thm}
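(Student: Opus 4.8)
The plan is to push everything down to the commutative associative algebra $A := V/C_2(V)$ using Theorem \ref{rel}, and then to compute the radical $r(M/C_2(V))$ by hand. First I would record the structure of $A$. Because $V$ is of $CFT$-type we have $V_0 = \mathbb{C}{\bf 1}$, and since the weight of $u_{-2}v$ is always positive, $C_2(V)$ is a graded subspace contained in $V_+ = \bigoplus_{n\geq 1}V_{(n)}$. Hence $A$ inherits a grading with $A_0 = \mathbb{C}\bar{\bf 1}$, and by $C_2$-cofiniteness $A$ is finite dimensional; consequently $\mathfrak{m} := \overline{V_+} = \bigoplus_{n\geq 1}A_n$ is a nilpotent ideal with $A = \mathbb{C}\bar{\bf 1}\oplus\mathfrak{m}$ and $A/\mathfrak{m}\cong\mathbb{C}$ (this is where the preceding Lemma on the absence of nontrivial idempotents fits in). Writing $\bar M := M/C_2(V)$, the hypotheses ${\bf 1}\notin M$ and $C_2(V)\subseteq M$ give $\bar{\bf 1}\notin\bar M$.

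The core of the proof is the claim $r(\bar M) = \mathfrak{m}$. The inclusion $\mathfrak{m}\subseteq r(\bar M)$ is immediate, since every element of $\mathfrak{m}$ is nilpotent and so some power of it is $0\in\bar M$. For the reverse inclusion I would argue by contradiction. Suppose $\bar a\in r(\bar M)\setminus\mathfrak{m}$, and write $\bar a = c\bar{\bf 1}+\bar n$ with $c\neq 0$ and $\bar n\in\mathfrak{m}$; since $\bar M$ is a subspace, $c^{-1}\bar a\in r(\bar M)$ as well, so we may assume $\bar a = \bar{\bf 1}+\bar n$. If $\bar n = 0$, then $\bar a^t = \bar{\bf 1}\in\bar M$ for all large $t$, contradicting $\bar{\bf 1}\notin\bar M$. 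If $\bar n\neq 0$, choose $k\geq 1$ with $\bar n^{k}\neq 0 = \bar n^{k+1}$, so that $\bar a^t = \sum_{j=0}^{k}\binom{t}{j}\bar n^j$. Applying the forward-difference operator $\Delta f(t) := f(t+1) - f(t)$ repeatedly to the membership $\bar a^t\in\bar M$ (valid for all $t$ beyond some $m$) and using that $\bar M$ is a subspace, one obtains $\Delta^{i}(\bar a^t) = \bar a^t\bar n^{i}\in\bar M$ for every $i\geq 0$ and every $t\geq m$. Now a descending induction on $i$ from $k$ down to $0$, using the identity $\bar a^t\bar n^{i} = \bar n^{i} + \binom{t}{1}\bar n^{i+1} + \cdots + \binom{t}{k-i}\bar n^{k}$, forces $\bar n^{i}\in\bar M$ for each $i$; taking $i=0$ yields $\bar{\bf 1}\in\bar M$, the desired contradiction. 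Therefore $r(\bar M) = \mathfrak{m}$, which is an ideal of $A$.

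From here the conclusion is quick. Since $A$ is commutative and finite dimensional, $\bar M$ lies in the class $\mathcal{G}(A)$ of Proposition \ref{Zhao}, and since $r(\bar M) = \mathfrak{m}$ is an ideal, Proposition \ref{Zhao}(3) gives that $\bar M$ is a Mathieu subspace of $A$; Theorem \ref{rel} then yields that $M$ is an $MZ_{0,-1}$-subspace of $V$. Finally, by Theorem \ref{rel}(1), $r_{0,-1}(M)$ is the preimage under $V\to A$ of $r(\bar M) = \mathfrak{m} = \overline{V_+}$, which (as $C_2(V)\subseteq V_+$) equals $V_+ = \bigoplus_{n=1}^{\infty}V_n$.

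The step I expect to be the main obstacle is the reverse inclusion $r(\bar M)\subseteq\mathfrak{m}$. The subtlety is that ${\bf 1}\notin M$ does \emph{not} imply $\bar M\subseteq\mathfrak{m}$ (for instance $\bar M = \mathbb{C}(\bar{\bf 1}+\bar n)$ with $\bar n\neq 0$ nilpotent avoids $\bar{\bf 1}$), so one genuinely needs the finite-difference/descending-induction mechanism to show that a hypothetical unit in $r(\bar M)$ would, through the relations $\bar a^t\bar n^{i}\in\bar M$, drag every power $\bar n^i$ — and hence $\bar{\bf 1}$ itself — into $\bar M$. Everything else is bookkeeping about the grading of $V/C_2(V)$ and citations of Theorem \ref{rel} and Proposition \ref{Zhao}.
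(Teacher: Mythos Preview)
Your argument is correct, but the paper's route is shorter and organized differently. The paper does not compute $r(\bar M)$ first; instead it invokes Proposition~\ref{Zhao}(2) directly: since $A=V/C_2(V)$ is finite dimensional, $\bar M\in\mathcal{G}(A)$, and by the preceding Lemma the only nonzero idempotent of $A$ is $\bar{\bf 1}\notin\bar M$, so $\bar M$ contains no nonzero idempotent and is therefore a Mathieu subspace. Theorem~\ref{rel} then gives the $MZ_{0,-1}$ property. For the identification of $r_{0,-1}(M)$ the paper now uses Proposition~\ref{Zhao}(3) in the \emph{forward} direction (Mathieu $\Rightarrow$ $r(\bar M)$ is an ideal) and argues: if $u=\mathbf{1}+v$ with $v\in V_+$ were in $r_{0,-1}(M)$, then $\bar u,\bar v\in r(\bar M)$, hence $\bar{\bf 1}=\bar u-\bar v\in r(\bar M)$, forcing $\mathbf{1}\in M$.

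By contrast, you compute $r(\bar M)=\mathfrak{m}$ from scratch via the finite-difference trick $\Delta^{i}(\bar a^{t})=\bar a^{t}\bar n^{i}$ and a descending induction, and only then cite Proposition~\ref{Zhao}(3) in the reverse direction. This is a legitimate and self-contained alternative; it avoids any appeal to Zhao's idempotent criterion (in fact, your parenthetical remark that ``this is where the preceding Lemma \ldots\ fits in'' is misplaced---your argument never uses that Lemma, since nilpotence of $\mathfrak{m}$ follows directly from the grading and finite dimension). What the paper's approach buys is brevity: the idempotent Lemma plus Zhao(2) replaces your entire finite-difference computation. What your approach buys is an explicit, elementary identification of $r(\bar M)$ without importing the machinery behind Proposition~\ref{Zhao}(2).
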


\begin{proof} Since $V/C_2(V)$ has no nontrivial idempotent, $dim_{\mathbb{C}} ~M/C_2(V)<\infty$ and ${\bf 1}+C_2(V)\not\in M/C_2(V)$, by Proposition \ref{Zhao}, we can conclude that $M/C_2(V)$ is an MZ-subspace of $V/C_2(V)$. Also, by Theorem \ref{rel}, $M$ is a $MZ_{0,-1}$-subspace of $V$.

Recall that $\oplus_{n=1}^{\infty}V_i\subseteq sr_{0,-1}(M)=r_{0,-1}(M)$. Let $u\in V$ such that $u={\bf 1}+v$ where $v\in \oplus_{n=1}^{\infty}V_n$. If $u\in r_{0,-1}(M)$ then $u+C_2(V)\in r(M/C_2(V))$. Since $r(M/C_2(V))$ is an ideal, and $v+C_2(V),~u+C_2(V)$ are in $r(M/C_2(V))$, we can conclude that ${\bf 1}+C_2(V)\in r(M/C_2(V))$. This implies that ${\bf 1}\in M$ which is impossible. Hence elements of the form ${\bf 1}+v$ where $v\in \oplus_{n=1}^{\infty}V_n$ are not in $r_{0,-1}(M)$. Therefore, $r_{0,-1}(M)=\oplus_{n=1}^{\infty}V_n$.  
\end{proof} 

\smallskip

\begin{ex} Assume that $V$ is either $V_L$ or $L(l,0)$ or $L(c_m,0)$ ($m\geq 2$). If $M$ is a subspace of $V$ such that ${\bf 1}\not\in M$ and $C_2(V)$ is a subset of $M$, then $M$ is a $MZ_{0,-1}$ subspace of $V$.
\end{ex}

\begin{rmk} If $V$ is a vertex operator algebra of $CFT$-type that satisfies the $C_2$-condition. If $M$ is a subspace of $V$ such that $C_2(V)\subseteq M$ and $M\neq V$, then $M$ is a $MZ_{0,-1}$ -subspace of $V$ only when ${\bf 1}\not\in M$.
\end{rmk}

\smallskip

We end this paper with a discussion about a connection between the Jacobian conjecture and Mathieu-Zhao subspaces of Heisenberg vertex operator algebras. The Jacobian conjecture was originally formulated in an article written by Keller in 1939 for the occasion of Furtw$\ddot{a}$ngler's 70th birthday \cite{K}. In this article he described a problem about polynomial automorphisms which he considered as interesting problem to think about. Nowadays, this problem is called the Keller problem. The Keller problem has been generalized to the Jacobian conjecture which is named after the occurrence of the Jacobian determinant in the problem. 

\medskip

\noindent{\bf Jacobian Conjecture.} For every positive integer $n$ the following statement holds: Every $\mathbb{C}$-endomorphism $\phi$ of $\mathbb{C}[x_1,...,x_n]$ with $\det J\phi=1$ is an automorphism, where $J\phi=\left(\frac{\partial \phi(x_i)}{\partial x_j}\right)_{1\leq i,j\leq n}$. 

\medskip

\noindent The Jacobian conjecture is a problem that is still open in all dimension $n>1$. There are partial solutions of this problem \cite{BE1, BE2, D, H, Mo, W, Wr}.

As we mentioned in the introduction, Mathieu-Zhao subspaces of commutative associative algebras grew out of several attempts to solve the Jacobian conjecture. One of these attempts was a result due to Zhao \cite{Z1}. He stated the following conjecture and showed that this conjecture implies the Jacobian conjecture. 

\medskip

\noindent{\bf Image Conjecture.}  $\sum_{i=1}^n (\partial_{x_i}-\zeta_i)\mathbb{C}[\zeta_1,...,\zeta_n,x_1,...,x_n]$ is a Mathieu-Zhao subspace of $\mathbb{C}[\zeta_1,...,\zeta_n,x_1,...,x_n].$

\medskip

Now, we let $\mathfrak{h}$ be a vector space over $\mathbb{C}$ with a basis $\{\beta^1,...,\beta^{n}, \alpha^1,...,\alpha^n\}$ and a bilinear form $(~,~)$ such that $(\beta^i,\beta^j)=\delta_{i,j}$, $(\alpha^i,\alpha^j)=\delta_{i,j}$, $(\beta^i,\alpha^j)=0$. Recall that
$M(1)/C_2(M(1))$ is isomorphic to $\mathbb{C}[\zeta_1,...,\zeta_n,x_1,....,x_n]$ as commutative associative algebras. 
For $i\in\{1,...,n\}$, we define a linear map $f_i:M(1)\rightarrow M(1)$ by $f_{i}(a)=(\beta^i_1-\alpha^i_{-1})(a)$ for $a\in M(1)$. Clearly, $f_i(C_2(M(1)))\subseteq C_2(M(1))$.

By Theorem \ref{rel}, we can conclude that 
\begin{thm} $\sum_{i=1}^k f_i(M(1))+C_2(M(1))$ is a $MZ_{0,-1}$-subspace of $M(1)$ if and only if $\sum_{i=1}^k (\partial_{x_i}-\zeta_i)\mathbb{C}[\zeta_1,...,\zeta_n,x_1,...,x_n]$ is a Mathieu-Zhao subspace of $\mathbb{C}[\zeta_1,...,\zeta_n,x_1,...,x_n].$
\end{thm}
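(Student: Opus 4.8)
The plan is to reduce the stated equivalence entirely to Theorem \ref{rel}, so the only real work is to identify the correct subspace of $V/C_2(V)$. First I would fix the isomorphism $M(1)/C_2(M(1)) \cong \mathbb{C}[\zeta_1,\dots,\zeta_n,x_1,\dots,x_n]$ under which $\beta^i(-1)\mathbf{1} + C_2(M(1)) \mapsto \zeta_i$ and $\alpha^i(-1)\mathbf{1} + C_2(M(1)) \mapsto x_i$ (the Heisenberg example tells us $M(1)/C_2(M(1))$ is the polynomial algebra in the degree-one generators, and the Poisson/commutative structure of Proposition \ref{c2v} identifies $(-1)$-multiplication with the commutative product). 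Under this isomorphism I must compute the image of the map induced on $M(1)/C_2(M(1))$ by $f_i = \beta^i_1 - \alpha^i_{-1}$.

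The key computational step is to show that $f_i$ descends to a well-defined $\mathbb{C}$-linear endomorphism $\bar f_i$ of $\mathbb{C}[\zeta_1,\dots,\zeta_n,x_1,\dots,x_n]$ that equals $\partial_{x_i} - \zeta_i\cdot$ (multiplication by $\zeta_i$). Well-definedness is already asserted in the excerpt ($f_i(C_2(M(1))) \subseteq C_2(M(1))$, which follows from Remark \ref{dv}(2) since $\beta^i_1$ and $\alpha^i_{-1}$ each preserve $C_2$). For the formula: the term $\alpha^i_{-1}$ acts modulo $C_2$ as multiplication by $\alpha^i(-1)\mathbf 1 \mapsto x_i$ by the definition of the commutative product; the term $\beta^i_1$ is a derivation of the $(-1)$-product modulo $C_2$ by the last relation in Proposition \ref{c2v} (with $a = \beta^i(-1)\mathbf 1$, noting $\beta^i_1\beta^i(-1)\mathbf 1 = (\beta^i,\beta^i)\mathbf 1 = \mathbf 1$ and $\beta^i_1\alpha^j(-1)\mathbf 1 = (\beta^i,\alpha^j)\mathbf 1 = 0$), so $\beta^i_1$ acts on generators by $\zeta_i \mapsto 1$ (wait: $\beta^i(-1)\mathbf 1 \mapsto \zeta_i$, and $\beta^i_1(\beta^i(-1)\mathbf 1) = \mathbf 1 \mapsto 1$), $\zeta_j \mapsto 0$ for $j \neq i$, $x_j \mapsto 0$; hence $\beta^i_1$ induces $\partial_{\zeta_i}$. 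That gives $\bar f_i = \partial_{\zeta_i} - x_i\cdot$, which is the image conjecture operator after the harmless relabeling swapping the roles of the two families of variables (equivalently, I would set up the isomorphism the other way, sending $\beta^i(-1)\mathbf 1 \mapsto x_i$ and $\alpha^i(-1)\mathbf 1 \mapsto \zeta_i$, so that $\bar f_i = \partial_{x_i} - \zeta_i\cdot$ matches the statement exactly).

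With $\bar f_i$ identified, the subspace $\sum_{i=1}^k f_i(M(1)) + C_2(M(1))$ is exactly the preimage in $M(1)$ of $\sum_{i=1}^k (\partial_{x_i} - \zeta_i)\mathbb{C}[\zeta_1,\dots,\zeta_n,x_1,\dots,x_n]$ under the quotient map, i.e. its image in $M(1)/C_2(M(1))$ equals $\sum_{i=1}^k \bar f_i(\mathbb{C}[\cdots])$. Since this subspace contains $C_2(M(1))$, Theorem \ref{rel} applies directly: $\sum_{i=1}^k f_i(M(1)) + C_2(M(1))$ is a $MZ_{0,-1}$-subspace of $M(1)$ if and only if its quotient $\sum_{i=1}^k \bar f_i(\mathbb{C}[\zeta_1,\dots,\zeta_n,x_1,\dots,x_n]) = \sum_{i=1}^k (\partial_{x_i} - \zeta_i)\mathbb{C}[\zeta_1,\dots,\zeta_n,x_1,\dots,x_n]$ is a Mathieu-Zhao subspace of $M(1)/C_2(M(1)) \cong \mathbb{C}[\zeta_1,\dots,\zeta_n,x_1,\dots,x_n]$, which is the claim.

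The main obstacle is the bookkeeping in the second paragraph: verifying that $\beta^i_1$ genuinely induces the partial derivative $\partial_{x_i}$ (resp.\ $\partial_{\zeta_i}$) on the polynomial ring requires care, because $\beta^i_1$ is not itself a derivation of $M(1)$ — only the induced map modulo $C_2(M(1))$ is a derivation of the commutative product, and one must invoke precisely the relation $(a_{-1}b)_0 v \equiv a_{-1}(b_0 v) + b_{-1}(a_0 v) \bmod C_2(V)$ from Proposition \ref{c2v} (applied with $a = \beta^i(-1)\mathbf 1$ so that the $(0)$-action of $a$ on degree-one generators is computed via $\beta^i_1$) together with the orthogonality relations $(\beta^i,\beta^j) = \delta_{ij}$, $(\beta^i,\alpha^j)=0$. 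Everything else is a direct invocation of Theorem \ref{rel}.
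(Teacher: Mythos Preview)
Your approach is exactly the paper's: the paper's entire proof is the single sentence ``By Theorem \ref{rel}, we can conclude that'', so your plan of identifying $\bar f_i$ with $\partial_{x_i}-\zeta_i$ (up to the harmless relabeling you noted) and then invoking Theorem \ref{rel} is precisely what is intended.

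One small technical correction in your bookkeeping paragraph: the identity from Proposition \ref{c2v} that you cite involves $a_0$, not $a_1$, so setting $a=\beta^i(-1)\mathbf 1$ there gives information about $\beta^i_0=\beta^i(0)$, not about $\beta^i_1=\beta^i(1)$. The clean way to see that $\beta^i_1$ acts as a derivation of the $(-1)$-product is to use the commutator formula (\ref{commutativity}) directly: $[\beta^i_1,u_{-1}]=(\beta^i_0 u)_0+(\beta^i_1 u)_{-1}$, and since $\beta^i(0)$ annihilates the vacuum module $M(1)$, the first term vanishes and $\beta^i_1$ is an \emph{honest} derivation of the $(-1)$-product on $M(1)$ (so your parenthetical ``$\beta^i_1$ is not itself a derivation of $M(1)$'' is actually false in this specific case, which only makes your life easier). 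With that fix, your identification of $\bar f_i$ goes through and the rest is indeed just Theorem \ref{rel}.
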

\begin{cor} If $\sum_{i=1}^k f_i(M(1))+C_2(M(1))$ is a $MZ_{0,-1}$-subspace of $M(1)$ for all $n\in\mathbb{N}$ then for every positive integer $n$ the following statement holds: Every $\mathbb{C}$-endomorphism $\phi$ of $\mathbb{C}[x_1,...,x_n]$ with $\det J\phi=1$ is an automorphism, where $J\phi=\left(\frac{\partial \phi(x_i)}{\partial x_j}\right)_{1\leq i,j\leq n}$.
\end{cor}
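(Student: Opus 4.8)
The plan is to deduce this corollary from the immediately preceding theorem together with Zhao's original result that the Image Conjecture implies the Jacobian Conjecture. First I would recall the setup: the linear map $f_i = \beta^i_1 - \alpha^i_{-1}$ on $M(1)$ preserves $C_2(M(1))$, so the sum $\sum_{i=1}^k f_i(M(1)) + C_2(M(1))$ is a genuine $\mathbb{C}$-subspace of $M(1)$ containing $C_2(M(1))$, and its image in $M(1)/C_2(M(1)) \cong \mathbb{C}[\zeta_1,\dots,\zeta_n,x_1,\dots,x_n]$ is exactly $\sum_{i=1}^k (\partial_{x_i} - \zeta_i)\mathbb{C}[\zeta_1,\dots,\zeta_n,x_1,\dots,x_n]$. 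This is the content needed to invoke Theorem~\ref{rel}.

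The main chain of implications runs as follows. Assume the hypothesis that $\sum_{i=1}^k f_i(M(1)) + C_2(M(1))$ is a $MZ_{0,-1}$-subspace of $M(1)$ for all $n \in \mathbb{N}$. By the preceding theorem, this is equivalent to the statement that $\sum_{i=1}^k (\partial_{x_i} - \zeta_i)\mathbb{C}[\zeta_1,\dots,\zeta_n,x_1,\dots,x_n]$ is a Mathieu-Zhao subspace of $\mathbb{C}[\zeta_1,\dots,\zeta_n,x_1,\dots,x_n]$ for all $n$. Taking $k = n$ in particular, we obtain precisely the Image Conjecture. Finally, I would cite Zhao's theorem from \cite{Z1} that the Image Conjecture implies the Jacobian Conjecture: every $\mathbb{C}$-endomorphism $\phi$ of $\mathbb{C}[x_1,\dots,x_n]$ with $\det J\phi = 1$ is an automorphism. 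This yields the desired conclusion.

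The only substantive point requiring care is the translation between the vertex-algebraic operators $\beta^i_1 - \alpha^i_{-1}$ and the differential operators $\partial_{x_i} - \zeta_i$ under the isomorphism $M(1)/C_2(M(1)) \cong \mathbb{C}[\zeta_1,\dots,\zeta_n,x_1,\dots,x_n]$. Concretely, one checks using the Heisenberg commutation relations that on the generators $\beta^j(-1)\mathbf{1}$ and $\alpha^j(-1)\mathbf{1}$, the operator $\beta^i_1$ acts as the annihilation operator dual to multiplication by $x_i$ — i.e., as $\partial_{x_i}$ modulo $C_2$ — while $\alpha^i_{-1}$ acts as multiplication by $\zeta_i$; the normal-ordering conventions and the definition of $C_2(M(1))$ ensure higher modes contribute only elements of $C_2(M(1))$. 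Since this identification is the hard part and is in fact what makes the preceding theorem applicable, in the proof of the corollary itself there is essentially nothing left to do beyond assembling Theorem~\ref{rel}, the preceding theorem, and \cite{Z1}. I would therefore write the proof of the corollary as a short two-sentence argument: invoke the preceding theorem to pass to the Image Conjecture, then invoke \cite{Z1}.

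\begin{proof}
By the preceding theorem, the hypothesis that $\sum_{i=1}^k f_i(M(1)) + C_2(M(1))$ is a $MZ_{0,-1}$-subspace of $M(1)$ for all $n \in \mathbb{N}$ is equivalent to the assertion that $\sum_{i=1}^n (\partial_{x_i} - \zeta_i)\mathbb{C}[\zeta_1,\dots,\zeta_n,x_1,\dots,x_n]$ is a Mathieu-Zhao subspace of $\mathbb{C}[\zeta_1,\dots,\zeta_n,x_1,\dots,x_n]$ for every $n$; this is precisely the Image Conjecture. By Zhao's theorem \cite{Z1} that the Image Conjecture implies the Jacobian Conjecture, it follows that for every positive integer $n$, every $\mathbb{C}$-endomorphism $\phi$ of $\mathbb{C}[x_1,\dots,x_n]$ with $\det J\phi = 1$ is an automorphism.
\end{proof}
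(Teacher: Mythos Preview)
Your proposal is correct and matches the paper's intended argument: the paper states the corollary without proof, treating it as an immediate consequence of the preceding theorem together with Zhao's result \cite{Z1} (already recalled just before the Image Conjecture) that the Image Conjecture implies the Jacobian Conjecture. Your two-sentence proof is exactly the reasoning the paper leaves implicit.
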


\clearpage

\section{References}


\end{document}